\documentclass[11pt]{article}
\usepackage{srcltx}
\usepackage{eurosym}
\usepackage{mathtools}
\usepackage{amsmath}
\usepackage{amsfonts}
\usepackage{amssymb}
\usepackage{amsthm}
\usepackage{comment}
\allowdisplaybreaks
\usepackage{graphicx}
\usepackage{mathrsfs}
\usepackage{xcolor}
\usepackage{exscale}
\usepackage{latexsym}
\usepackage{authblk}
\newcommand{\overrightleft}[1]{%
  \overset{\stackrel{\rightarrow}{\leftarrow}}{#1}%
}
\newtheorem{assumption}{Assumption}

\usepackage[colorlinks,plainpages=true,pdfpagelabels,hypertexnames=true,colorlinks=true,pdfstartview=FitV,linkcolor=blue,citecolor=red,urlcolor=black]{hyperref}
\PassOptionsToPackage{unicode}{hyperref}
\PassOptionsToPackage{naturalnames}{hyperref}
\usepackage{enumerate}
\usepackage[shortlabels]{enumitem}
\usepackage{bookmark}
\usepackage{wasysym}
\usepackage[ddmmyyyy]{datetime}
\usepackage[margin=1in]{geometry}
\makeatletter
\g@addto@macro\normalsize{%
	\setlength\abovedisplayskip{4pt}
	\setlength\belowdisplayskip{4pt}
	\setlength\abovedisplayshortskip{4pt}
	\setlength\belowdisplayshortskip{4pt}
}
\numberwithin{equation}{section}
\everymath{\displaystyle}
\usepackage[capitalize,nameinlink]{cleveref}
\crefname{section}{Section}{Sections}
\crefname{subsection}{Subsection}{Subsections}
\crefname{condition}{Condition}{Conditions}
\crefname{hypothesis}{Hypothesis}{Conditions}
\crefname{lemma}{Lemma}{Lemmas}
\crefname{definition}{Definition}{Definitions}

\crefformat{equation}{\textup{#2(#1)#3}}
\crefrangeformat{equation}{\textup{#3(#1)#4--#5(#2)#6}}
\crefmultiformat{equation}{\textup{#2(#1)#3}}{ and \textup{#2(#1)#3}}
{, \textup{#2(#1)#3}}{, and \textup{#2(#1)#3}}
\crefrangemultiformat{equation}{\textup{#3(#1)#4--#5(#2)#6}}%
{ and \textup{#3(#1)#4--#5(#2)#6}}{, \textup{#3(#1)#4--#5(#2)#6}}%
{, and \textup{#3(#1)#4--#5(#2)#6}}

\Crefformat{equation}{#2Equation~\textup{(#1)}#3}
\Crefrangeformat{equation}{Equations~\textup{#3(#1)#4--#5(#2)#6}}
\Crefmultiformat{equation}{Equations~\textup{#2(#1)#3}}{ and \textup{#2(#1)#3}}
{, \textup{#2(#1)#3}}{, and \textup{#2(#1)#3}}
\Crefrangemultiformat{equation}{Equations~\textup{#3(#1)#4--#5(#2)#6}}%
{ and \textup{#3(#1)#4--#5(#2)#6}}{, \textup{#3(#1)#4--#5(#2)#6}}%
{, and \textup{#3(#1)#4--#5(#2)#6}}

\crefdefaultlabelformat{#2\textup{#1}#3}
\newtheorem{theorem} {Theorem}[section]

\newtheorem{lemma}[theorem]{Lemma}

\newtheorem{counter example}[theorem]{Counter Example}
\newtheorem{remark}[theorem] {Remark}
\newtheorem{definition}[theorem] {Definition}
\def\CC{{\rm \kern.24em \vrule width.02em height1.4ex depth-.05ex \kern-.26emC}}

\def\TagOnRight

\def\AA{{it I} \hskip-3pt{\tt A}}

\def\QQ{\rlap {\raise 0.4ex \hbox{$\scriptscriptstyle |$}} {\hskip -0.1em Q}}

\makeatletter
\newcommand{\vo}{\vec{o}\@ifnextchar{^}{\,}{}}
\makeatother
\def\YYint#1#2#3{{\setbox0=\hbox{$#1{#2#3}{\iint}$}
		\vcenter{\hbox{$#2#3$}}\kern-.50\wd0}}
\def\bm \sigmant#1{\mathchoice
	{\XXint\displaystyle\textstyle{#1}}%
	{\XXint\textstyle\scriptstyle{#1}}%
	{\XXint\scriptstyle\scriptscriptstyle{#1}}%
	{\XXint\scriptscriptstyle\scriptscriptstyle{#1}}%
	\!\int}
\def\XXint#1#2#3{{\setbox0=\hbox{$#1{#2#3}{\int}$}
		\vcenter{\hbox{$#2#3$}}\kern-.50\wd0}}

\makeatletter
\def\namedlabel#1#2{\begingroup
	\def\@currentlabel{#2}%
	\label{#1}\endgroup
}
\makeatother
\makeatletter
\newcommand{\rmh}[1]{\mathpalette{\raisem@th{#1}}}
\newcommand{\raisem@th}[3]{\hspace*{-1pt}\raisebox{#1}{$#2#3$}}
\makeatother

\newcounter{desccount}

\newcommand{\descref}[2]{\hyperref[#1]{\textnormal{\textcolor{black}{}\textcolor{blue}{ #2}\textcolor{black}{}}}}
\newcommand{\dref}[2]{\hyperref[#1]{\textcolor{black}{(}\textcolor{blue}{\bf #2}\textcolor{black}{)}}}
\newcommand{\be} {\begin{eqnarray}}
	\newcommand{\ee} {\end{eqnarray}}
\newcommand{\Bea} {\begin{eqnarray*}}
	\newcommand{\Eea} {\end{eqnarray*}}
\DeclareMathOperator{\lip}{Lip}

\newcommand{\gh}[1]{\left( #1\right)}

\newcommand{\dx}{\Delta x}
\newcounter{whitney}
\refstepcounter{whitney}

\newcounter{ineqcounter}
\refstepcounter{ineqcounter}
\makeatletter
\def\ps@pprintTitle{%
	\let\@oddhead\@empty
	\let\@evenhead\@empty
	\def\@oddfoot{}%
	\let\@evenfoot\@oddfoot}
\makeatother
\usepackage[doublespacing]{setspace}
\usepackage[titletoc,toc,page]{appendix}

\makeatletter
\newcommand{\refcheckize}[1]{%
	\expandafter\let\csname @@\string#1\endcsname#1%
	\expandafter\DeclareRobustCommand\csname relax\string#1\endcsname[1]{%
		\csname @@\string#1\endcsname{##1}\wrtusdrf{##1}}%
	\expandafter\let\expandafter#1\csname relax\string#1\endcsname
}
\makeatother
\refcheckize{\cref}
\refcheckize{\Cref}


\makeatletter
\newcommand{\mainsectionstyle}{%
	\renewcommand{\@secnumfont}{\bfseries}
	\renewcommand\section{\@startsection{section}{2}%
		\z@{.5\linespacing\@plus.7\linespacing}{-.5em}%
		{\normalfont\bfseries}}%
}
\makeatother
\usepackage{pgf,tikz}
\usetikzlibrary{arrows}
\usetikzlibrary{decorations.pathreplacing}
\usepackage{xpatch}
\xpatchcmd{\MaketitleBox}{\hrule}{}{}{}% remove first horizontal rule (above abstract)
\xpatchcmd{\MaketitleBox}{\hrule}{}{}{}% remoce second horizonral rule (below keywords)

\date{}
\usepackage{scalerel}
\makeatletter

\usepackage{subcaption}
\usepackage{hyperref}
\usepackage{caption}
\usepackage{subcaption}

\usepackage[utf8]{inputenc}
\allowdisplaybreaks
\usepackage{amsmath}
\usepackage{graphicx}
\usepackage{mathtools} 
\usepackage{hyperref}       
\usepackage{url}      
\usepackage{mathrsfs}
\usepackage{graphicx}
\usepackage{array}
\usepackage{color} 
\usepackage{tabularx}
\usepackage{amsmath,mathdots,amsthm}
\usepackage{amssymb}
\usepackage{amsfonts}
\usepackage{xcolor}
\usepackage{bm}
\usepackage{soul}
\usepackage{float}
\usepackage{cite}

\hypersetup{
	colorlinks=true,                          
	linkcolor=blue, % equation, section link color
	citecolor=blue, % bib color
	urlcolor=black  % url color if any
} 
\newtheorem{thm}{Theorem}[section]

\newtheorem{Lemma}[thm]{Lemma}

 \numberwithin{equation}{section}

\numberwithin{equation}{section}
\newcommand{\prn}[1]{\left( #1 \right)}

\newcommand{\rel}{^{\alpha, \tau}}

\newcommand{\hlf}{\frac{1}{2}}
\newcommand{\dt}{\Delta t}

\title{Energy consistent hyperbolic approximation for a class of fourth-order partial differential equations}

\hypersetup{
pdftitle={},
pdfauthor={},
pdfkeywords={},
}
\author[1]{Rahul Barthwal\thanks{\href{mailto:rahul.barthwal@mathematik.uni-stuttgart.de}{rahul.barthwal@mathematik.uni-stuttgart.de}}}
\author[2]{Firas Dhaouadi\thanks{\href{mailto:firas.dhaouadi@bordeaux-inp.fr}{firas.dhaouadi@bordeaux-inp.fr}}}
\author[1]{Christian Rohde\thanks{\href{mailto:christian.rohde@mathematik.uni-stuttgart.de}{christian.rohde@mathematik.uni-stuttgart.de}}}

\affil[1]{\footnotesize Institute of Applied Analysis and Numerical Simulation, University of Stuttgart\\

Pfaffenwaldring 57, 70569 Stuttgart, Germany}
\affil[2]{\footnotesize Institut de Mathématiques de Bordeaux, Université de Bordeaux, CNRS UMR 5251\\

Bordeaux INP, INRIA, F-33400, Talence, France}
\date{}

\begin{document}
\maketitle

\begin{abstract}
In this article, we propose a novel hyperbolic relaxation system for a general class of fourth-order nonlinear partial differential equations arising in the modelling of thin film flows or the phase separation of binary mixtures. The approximations are constructed to dissipate energies which recover the energy (Lyapunov) functional of the limit equation when the relaxation parameters vanish.

Using the relative energy framework, we prove the convergence of weak entropy solutions of the relaxation system to smooth solutions of the limit equation. We validate our analysis with a series of numerical examples for thin film equations and Cahn-Hilliard equations.
\end{abstract}
{\textbf{Key words.} Fourth-order thin film equations, Cahn-Hilliard equations, hyperbolic relaxation systems, relative entropy, Lax-Wendroff solver.}
\medskip 
\section{Introduction}
In this article, we are interested in developing a lower-order approximate system for a general fourth-order nonlinear PDE of the form
\begin{align}\label{eq: main}
h_t + \nabla \cdot \left( \mathcal{M}(h)\nabla\big(\gamma \Delta h - \Pi(h)\big) \right) = 0
\quad \text{in } \Omega_T := \Omega \times (0,T) \subset \mathbb{T}^d\times\mathbb{R}_+ ,
\end{align}
where $\mathbb{T}^d$ denotes the $d$-dimensional torus. We supplement \eqref{eq: main} with initial data of the form
\begin{align}\label{initial_data_main}
    h(\cdot, 0)=h_0> 0.
\end{align}
The investigation of fourth-order nonlinear partial differential equations (PDEs) of the form \eqref{eq: main} has long attracted attention from physicists, engineers, and mathematicians because of their wide range of applications. In particular, such equations can model phenomena like thin-film and two-phase flows or plasticity dynamics; see e.g. \cite{elliott1996cahn, grun1995degenerate, beretta1995nonnegative} and references cited therein. In lubrication theory \cite{bertozzi1996lubrication, bertozzi1994lubrication}, $h$ describes the height of a viscous droplet spreading on a plain substrate, while in the case of the Cahn-Hilliard model for the phase separation of binary mixtures, $h$ is an indicator for the presence of the components. For the case of plasticity modelling, $h$ stands for the density of dislocations \cite{grun1995degenerate}. When the equation \eqref{eq: main} models fluid flow equations,  $\gamma>0$ is typically interpreted as the capillarity coefficient, $\mathcal{M}(h)$ as a mobility function, and $\Pi(h)$ as a disjoining-pressure (or effective chemical-potential) term describing the molecular interactions (e.g., van der
Waals forces acting between gas and solid) or thermal effects. In many cases of interest, $\Pi(h)$ can become singular at $h=0$. For thin film flows, the explicit form of the nonlinear mobility function $\mathcal{M}(h)$
depends upon the boundary condition at the liquid-solid interface. A no-slip condition
entails $\mathcal{M}(h)=h^3$, while the Navier slip conditions lead to $\mathcal{M}(h) = h^3 + \epsilon h^n, \epsilon >0, ~n\in (0, 3)$. Note that different choices of mobility and pressure functions can describe completely different physical mechanisms. For example, the choice of $\mathcal{M}(h)=h^3$ and $\Pi(h)=h$ leads to a thin film equation of the form
\begin{align}
    h_t=(h^3\nabla h)-\gamma(h^3\nabla \Delta h).
\end{align}
However, for $\mathcal{M}(h)=1$, it reduces to the Cahn-Hilliard equation with unit mobility, which has been analyzed extensively recently; see e.g. \cite{elliott1996cahn, dhaouadi2025first}.

An important property of the equation \eqref{eq: main} is its gradient-flow structure. Solutions of \eqref{eq: main} dissipate the Lyapunov (energy) functional
\begin{align}\label{energy}
F[h] = \int_{\Omega} \left( \frac{\gamma}{2}|\nabla h|^2 + W(h) \right)\, d\mathbf{x},\quad W'(h)=\Pi(h)
\end{align}
Relying on this energy structure of the equation \eqref{eq: main}, a well-established global existence theory of weak and strong solutions of \eqref{eq: main}-\eqref{initial_data_main} has been established for different choices of mobility functions; see e.g. \cite{elliott1996cahn, giacomelli1999fourth, gnann2018navier, giacomelli2014well, dai2016weak}. In particular, for non-degenerate mobilities with $d\leq 3$, local and global well-posedness of classical solutions in $H^s$-spaces for sufficiently large $s$ can be easily obtained for monotone pressures by utilizing classical parabolic theory; see \cite{bernis1990higher} for more details.

Substantial effort has also gone into understanding the qualitative behaviour (spreading, rupture, dewetting) of these flows; see, e.g., \cite{passo1998fourth,bernis1990higher} and the references cited therein. On the numerical side, positivity-preserving and energy-stable schemes have been designed for degenerate mobilities and singular pressures, ensuring mass conservation and monotone free-energy dissipation at the discrete level; see, e.g., \cite{zhornitskaya1999positivity,grun2000nonnegativity}. Collectively, the rich literature indicates that the mobility and pressure choices are not merely modelling details but determine existence theory, qualitative behaviour, and robust discretizations for \eqref{eq: main}-\eqref{initial_data_main}. Also, it was pointed out in the review paper by Bertozzi \cite{bertozzi1998mathematics} that, ``Even when the analytical solution is strictly positive, the solution of a generic scheme may become negative, especially when the grid is underresolved." This highlights the need for specially tailored numerical methods for equations of the form \eqref{eq: main} to preserve the qualitative dynamics of the solution.

One possible way to solve \eqref{eq: main}-\eqref{initial_data_main} numerically is to approximate it with lower-order systems, for which efficient numerical solvers are available. In recent years, much attention has been given to approximate higher-order PDEs with some lower-order approximate systems to understand the solution dynamics; see e.g., \cite{corli2014parabolic, corli2012singular}. In particular, hyperbolic relaxation systems have emerged as effective approximations of higher-order equations and have attracted substantial interest among researchers working in this area; see e.g., \cite{biswas2025traveling, dhaouadi2025first, giesselmann2025convergence, GKLR26, giesselmann2026justification}. However, most of these relaxation systems target models where the higher-order contribution is linear, so that no nonlinear product or state-dependent mobility appears in the highest derivatives, and also lack rigorous convergence analysis even with linear higher-order terms. In contrast, genuinely nonlinear fourth-order problems like \eqref{eq: main}, featuring (non-)degenerate mobilities and possibly singular pressures, remain comparatively underanalyzed within the relaxation framework. To address this gap, we propose a first-order hyperbolic relaxation system for \eqref{eq: main} in this article, generalizing the approach proposed in \cite{dhaouadi2025first}, to the case where non-constant mobility is present. More importantly, the relaxation system preserves the structural properties of the equation \eqref{eq: main}. In particular, the proposed system (see \eqref{hyperbolic_system}) preserves mass, mirrors the gradient-flow structure of \eqref{eq: main}, and preserves the energy structure of the underlying dynamics as the relaxation parameters vanish.
\medskip

Under some structural conditions on the mobility function $\mathcal{M}(h)$ and the pressure function $\Pi(h)$ (see Assumption (\ref{assumption.A}) below), we first prove that the Cauchy problem for \eqref{hyperbolic_system} is locally well-posed in the class of classical solutions. Furthermore, exploiting the energy structure of the hyperbolic system, we show that the Bregman distance between solutions of \eqref{hyperbolic_system} and solutions of the limit equation \eqref{eq: main} vanishes as the relaxation parameters tend to their limiting values. More precisely, the relative energy framework yields the convergence of weak entropy solutions of \eqref{hyperbolic_system} to a smooth solution of \eqref{eq: main} throughout the maximal interval of existence of the latter. For background on the relative entropy/energy method, we refer the reader to \cite{dafermos2005hyperbolic}.

We complement the theoretical analysis with a set of numerical experiments for thin-film and Cahn--Hilliard equations, demonstrating the accuracy and efficiency of the proposed relaxation system. Although Cahn--Hilliard-type equations are not covered by our analysis because the associated energy may be nonconvex, the numerical results indicate that \eqref{hyperbolic_system} nevertheless reproduces their nonlinear dynamics and captures the relevant solution structures with high accuracy. This provides evidence for the versatility and robustness of the proposed framework. To further illustrate its scope, we extend the relaxation approach to equations of the form \eqref{eq: main} with an additional first-order flux, for instance, from gravity or other nonlinear effects. The corresponding simulations show that our approximation approach accurately captures the resulting dynamics, including nonclassical shock waves, which is rare to find in the available literature on relaxation systems.

To make the following discussions precise, we now state the explicit assumptions on the mobility function $\mathcal{M}(h)$ and the pressure function $\Pi(h)$. They are assumed to satisfy the following conditions.
%\medskip\\
\begin{assumption}\label{assumption.A}
\leavevmode
Let $L, m_0, \delta>0$ be a constant. We have
\begin{enumerate}[label=($\mathcal{A}.\arabic*$),ref=$\mathcal{A}.\arabic*$]
  \item\label{A.1}The mobility function $\mathcal{M}:\mathbb{R}^d\to\mathbb{R}_+$ satisfies $\mathcal{M}\in C^\infty(\mathbb{R})$ with $\max_{h\in \Omega}\,\lvert \mathcal{M}'(h)\rvert < L$ and $\mathcal{M}(h)>1/m_0>0$ for all $h\in [0, \infty)$.
  \item\label{A.2} The pressure function $\Pi:\mathbb{R}^d\to\mathbb{R}$ satisfies $\Pi\in C^\infty(\mathbb{R})$ with $\max_{h\in \Omega}\lvert \Pi'(h)\rvert,\,\, \max_{h\in \Omega}\lvert \Pi''(h)\rvert  < L$ and $\Pi'(h)\geq \delta$ for all $h\in [0, \infty)$.
\end{enumerate}
\end{assumption}
\begin{remark}
Due to the monotonicity assumption (\ref{A.2}) on the pressure function, our analysis does not include typical Cahn-Hilliard type equations with non-monotone pressure.   
\end{remark}
The rest of the article is structured as follows. In Section \ref{sec: model}, we present the suggested relaxation \eqref{hyperbolic_system}, analyze its mathematical structure, and prove the local well-posedness of the Cauchy problem. 
Section \ref{sec: convergence} is devoted to proving the convergence of the solutions of the relaxation approximation to the solutions of \eqref{eq: main}-\eqref{initial_data_main} using the relative energy framework. In Section \ref{sec: numerics}, we provide several test cases validating the accuracy and robustness of the proposed relaxation system. Conclusions and outlooks are provided in Section \ref{sec: conclusions}.
\subsection*{Notations}\label{Notation}
Throughout the article, we use the following standard notations. We denote by $C^k(D)$ the space of $k$-times continuously differentiable functions defined in some domain $D\subset \mathbb{T}^d$ and $k\in \mathbb{N} \cup \{ \infty \}$. By $C^{\lip}_0(D)$, we denote the space of Lipschitz continuous functions having compact support. 
%$M_n(\mathbb{R})$ represents the linear space of $n\times n$ real-valued matrices. 
Further, for $p \in [1,\infty]$, we denote the Lebesgue space on $D$ as $L^p(D)$ and the $L^p(D)$-norm as ${\lVert \cdot \rVert}_{L^p(D)}$, which for a function $g \in L^p(D)$ is defined as
\begin{align*}
    {\lVert g\rVert}_{L^p(D)} = \biggl(\int_{D} |g(x)|^p \,dx\biggr)^{\frac{1}{p}} ~ \text{for}~p \in[1,\infty),
\end{align*}
and 
\[
{\lVert g\rVert}_{L^\infty(D)}
:=
\operatorname*{ess\,sup}_{x\in D}|g(x)|.
\]
We denote by $H^k(D)$ the Sobolev space of order $k$ consisting of 
square-integrable functions whose weak derivatives up to order $k$ are also square-integrable, that is
\[
    H^k(D)
    := \bigl\{ g \in {L}^{2}(D) : \partial^m g \in {L}^{2}(D)
        \text{ for all multi-indices}~ m \text{ with } |m| \le k \bigr\},
\]
equipped with the norm
\[
    {\lVert g \rVert}_{H^k(D)}^2
    := \sum_{|m| \le k} {\lVert \partial^m g \rVert}_{{L}^{2}(D)}^2,
\]
where $\partial^m g$ denotes the weak derivative of $g$ of order $m$. \\
Finally, for a Banach space $X$ and $T>0$, we denote by $L^p(0, T;X)$, the Bochner space of measurable functions $g:(0,T)\to X$ with an induced norm
\[
    {\lVert g \rVert}_{L^p(0,T;X)}
    = \biggl( \int_0^T {\lVert u(t) \rVert}_{X}^p \,\mathrm{d}t \biggr)^{1/p},
\]
for $p \in [1,\infty)$, while for $p = \infty$
\[
{\lVert g \rVert}_{L^{\infty}(0,T;X)}
    = \operatorname*{ess\,sup}_{t\in[0, T]} {\lVert g(t) \rVert}_{X}.
\]
%%%%%%%%%%%%%%%%%%%%%%%%%%%%%%%%%%%%%%%%%%%%%%%%%%%
\section{The relaxation approximation and the local well-posedness of the Cauchy problem}\label{sec: model}
In this section, we suggest a first-order relaxation system to approximate the limit equation \eqref{eq: main}. Our approach relies on the ideas from \cite{dhaouadi2025first,corli2012singular, corli2014parabolic}. We show that the relaxation system is hyperbolic and prove the well-posedness of the associated Cauchy problem.
\subsection{Formulation of the relaxation approximation}
Based on the relaxation approaches for diffusive-dispersive equations in \cite{corli2012singular, corli2014parabolic}, we propose to relax the equation \eqref{eq: main} by a nonlocal equation of the form 
\begin{align}\label{first_relaxation}
    h^{\alpha}_t=\nabla\cdot\big(\mathcal{M}(h^{\alpha})\nabla \big(\Pi(h^{\alpha})-\alpha (\mathcal{K}^{\alpha} \ast h^{\alpha}-h^{\alpha})\big)\big)\quad \mathrm{in}\quad \Omega_T,
\end{align}
which depends on the parameter $\alpha>0$. In \eqref{first_relaxation} the symbol ``$\ast$" denotes spatial convolution and $\mathcal{K}^{\alpha}: \mathbb{R}^d\mapsto \mathbb{R}$ is a symmetric kernel function, which satisfies 
\[
\int_{\mathbb{R}^d} \mathcal{K}^{\alpha}(\mathbf{x})\, d\mathbf{x}=1.
\]
Choosing the kernel $\mathcal{K}^\alpha$ as the Green function \cite{corli2012singular} of the screened Poisson equation for an additional unknown $\phi^{\alpha}:\Omega_T\rightarrow \mathbb{R}$ such that $\phi^{\alpha}=\mathcal{K}^{\alpha}\ast h^{\alpha}$,  
one can rewrite equation \eqref{first_relaxation} into the equivalent local parabolic–elliptic form
\begin{equation}\label{local_relaxation}
\begin{aligned}
h^{\alpha}_t&=\nabla\cdot(\mathcal{M}(h^{\alpha})\nabla(\Pi(h^{\alpha})+\alpha(h^{\alpha}-\phi^{\alpha}))),\\
   -\gamma \Delta \phi^{\alpha}+\alpha \phi^{\alpha}&=\alpha h^{\alpha}
\end{aligned}
\quad \mathrm{in}\quad \Omega_T.
\end{equation}
Similar to \cite{barthwal2025relaxation, dhaouadi2025first}, we add temporal dynamics to the variable $\phi$ using a linear wave operator. Precisely, we consider the system
\begin{equation}\label{wave_system}
\begin{aligned}
h^{\alpha}_t&=\nabla\cdot(\mathcal{M}(h^{\alpha})\nabla(\Pi(h^{\alpha})+\alpha(h^{\alpha}-\phi^{\alpha}))),\\
   \beta \phi^{\alpha}_{tt}-\gamma \Delta \phi^{\alpha}&=\alpha(h^{\alpha}-\phi^{\alpha}),
\end{aligned}
\end{equation}
where $\beta$ is a parameter depending on $\alpha$ such that $\beta(\alpha)\rightarrow 0$ as $\alpha\rightarrow \infty$. We will prove in Section \ref{sec: convergence} that the solutions $(h^\alpha, \phi^\alpha)$ to the Cauchy problem for \eqref{wave_system} converges to $(h, h)$, with $h$ being the solution to \eqref{eq: main}-\eqref{initial_data_main}.

Note that the wave equation for $\phi^{\alpha}$ can be converted into an equivalent first-order form as long as boundary-compatible conditions are prescribed. The first equation in \eqref{wave_system} still possesses second-order terms and therefore, we use an additional relaxation approach of the Cattaneo type \cite{cattaneo1948sulla} to approximate the first equation of \eqref{wave_system} with a relaxation parameter $\tau$ such that
\begin{equation}\label{q_relaxation}
\begin{aligned}
    h^{\alpha, \tau}_t+\nabla\cdot \mathbf{q}^{\alpha, \tau}&=0,\\
    \tau \mathbf{q}^{\alpha, \tau}_t+\nabla\big(\Pi(h^{\alpha, \tau})+\alpha(h^{\alpha, \tau}-\phi^{\alpha, \tau})\big)&=-\dfrac{\mathbf{q}^{\alpha, \tau}}{\mathcal{M}(h^{\alpha, \tau})}.  
\end{aligned}
\end{equation}
Formally speaking, we recover the second-order system \eqref{wave_system} when $\tau\rightarrow 0$. Now combining the systems \eqref{wave_system} and \eqref{q_relaxation} and using the definitions
\[w^{\alpha, \tau}=\phi^{\alpha, \tau}_t,\quad \quad \mathbf{p}^{\alpha, \tau}=\nabla \phi^{\alpha, \tau},\] 
one obtains the first-order system
\begin{equation*}\label{hyperbolic_system_new}
\begin{aligned}
    h^{\alpha, \tau}_t+\nabla\cdot \mathbf{q}^{\alpha, \tau}&=0,\\
     \phi^{\alpha, \tau}_t&=w^{\alpha, \tau},\\
    \tau \mathbf{q}^{\alpha, \tau}_t+\nabla\big(\Pi(h^{\alpha, \tau})+\alpha(h^{\alpha, \tau}-\phi^{\alpha, \tau})\big)&=-\dfrac{\mathbf{q}^{\alpha, \tau}}{\mathcal{M}(h^{\alpha, \tau})},\\
    \beta w^{\alpha, \tau}_t-\gamma \nabla\cdot \mathbf{p}^{\alpha, \tau}&=\alpha(h^{\alpha, \tau}-\phi^{\alpha, \tau}),\\
    \mathbf{p}^{\alpha, \tau}_t-\nabla w^{\alpha, \tau}&=0
\end{aligned}\quad \mathrm{in}\quad \Omega_T.
\end{equation*}
With the variable $\psi^{\alpha, \tau}=\alpha(h^{\alpha, \tau}-\phi^{\alpha, \tau})$, we obtain a system of the form
\begin{equation}\label{hyperbolic_system}
\begin{aligned}
     h^{\alpha, \tau}_t+\nabla\cdot \mathbf{q}^{\alpha, \tau}&=0,\\
     \dfrac{1}{\alpha}\psi^{\alpha, \tau}_t+\nabla\cdot \mathbf{q}^{\alpha, \tau}&=-w\rel,\\
     \tau\mathbf{q}^{\alpha, \tau}_t+\nabla(\Pi(h^{\alpha, \tau})+\psi^{\alpha, \tau})&=-\dfrac{\mathbf{q}^{\alpha, \tau}}{\mathcal{M}(h^{\alpha, \tau})},\\
    \beta w^{\alpha, \tau}_t-\gamma \nabla\cdot \mathbf{p}^{\alpha, \tau}&=\psi^{\alpha, \tau},\\
    \mathbf{p}^{\alpha, \tau}_t-\nabla w^{\alpha, \tau}&=0
\end{aligned} \quad \mathrm{in}\quad \Omega_T.
\end{equation}
In what follows, we treat the system \eqref{hyperbolic_system} as our main relaxation approximation. In \eqref{hyperbolic_system}, $\alpha, \beta$ and $\tau$ are positive numbers such that $\tau\rightarrow 0$ and $\beta:=\beta(\alpha)\rightarrow 0$ as $\alpha\rightarrow \infty$. In the stiff relaxation limit, we expect that the variable $\psi^{\alpha, \tau}$ approximates $-\gamma \Delta h$, $\mathbf{q}^{\alpha, \tau}$ approximates the nonlinear flux $-\mathcal{M}(h)\nabla(\Pi(h)-\gamma \Delta h)$, $w^{\alpha, \tau}$ approximates $h_t$ and $\mathbf{p}^{\alpha, \tau}$ approximates $\nabla h$. This indicates that the system \eqref{hyperbolic_system} is a consistent approximation of \eqref{eq: main}.

For the discussion to be followed, we define the state space for the system \eqref{hyperbolic_system} as follows.
\begin{align}\label{statespace}
    \mathcal{U}=\{(h, \psi, \mathbf{q}, w, \mathbf{p})^\top \in \mathbb{R}^{2d+3}: h>0\}
\end{align}
We complement the system \eqref{hyperbolic_system} for the unknown $\mathbf{U}\rel=(h\rel, \psi\rel, \mathbf{q\rel}, w\rel, \mathbf{p\rel})^\top$ with well-prepared initial data of the form
\begin{equation}\label{initial_data_hyp_relaxation}
\mathbf U\rel_0=\mathbf U\rel(\cdot,0)=
\bigl(h_0,-\gamma\Delta h_0,-\mathcal M(h_0)\nabla(\Pi(h_0)-\gamma\Delta h_0),\partial_t h(\cdot,0),\nabla h_0\bigr)^\top,
\end{equation}
where $h_0$ is defined in \eqref{initial_data_main}. In Section \ref{sec: convergence}, we explicitly prove that weak entropy solutions of the system \eqref{hyperbolic_system} (see Definition \ref{entropy_soln}) with well-prepared initial data of the form \eqref{initial_data_hyp_relaxation} converge to a sufficiently regular solution of the Cauchy problem \eqref{eq: main}-\eqref{initial_data_main}.
\subsection{Hyperbolicity of the relaxation approximation \eqref{hyperbolic_system}}
We now study the hyperbolicity of the system
\eqref{hyperbolic_system}. For the sake of simplicity, we omit the superscripts
$(\alpha,\tau)$. Let $\mathbf n\in\mathbb S^{d-1}$ be an arbitrary unit vector, and consider an orthonormal basis $\{\mathbf n,\mathbf t_2,\ldots,\mathbf t_d\}$ of $\mathbb R^d$. 

For the unknowns $\mathbf{q}$ and $\mathbf{p}$, we define the normal components by
\[
q_1=\mathbf q\cdot\mathbf n,
\qquad
p_1=\mathbf p\cdot\mathbf n,
\]
and the tangential components by
\[
q_j=\mathbf q\cdot\mathbf t_j,
\qquad
p_j=\mathbf p\cdot\mathbf t_j,
\qquad j=2,\ldots,d.
\]
Thus, we can decompose $\mathbf{q}$ and $\mathbf{p}$ as
\[
\mathbf q
=
q_1\mathbf n+\sum_{j=2}^d q_j\mathbf t_j,
\qquad
\mathbf p
=
p_1\mathbf n+\sum_{j=2}^d p_j\mathbf t_j.
\]
We then introduce the reordered state vector
\begin{align}\label{U_vector}
\mathbf U
=
\bigl(
h,\psi,q_1,w,p_1,
q_2,\ldots,q_d,p_2,\ldots,p_d
\bigr)^{\mathsf T}
\in\mathbb [0, \infty)\times R^{2d+2}.
\end{align}
The variables $q_1$ and $p_1$ are the normal components in the
direction $\mathbf n$, whereas $q_j$ and $p_j$, $j=2,\ldots,d$, are
the components tangent to the hyperplane $\mathbf n^\perp$. We then prove the hyperbolicity of the system \eqref{hyperbolic_system} in the following lemma.
\begin{lemma}
Let $\alpha, \beta,\gamma, \tau>0$ such that $\alpha+\Pi'(h)>0$ and $\alpha+\Pi'(h)> \dfrac{\gamma \tau}{\beta}$ for all $h\in [0, \infty)$. \\
Then, the system \eqref{hyperbolic_system} is hyperbolic with $2d+3$ real eigenvalues given by
\begin{equation}\label{eigen_values}
\begin{aligned}
\lambda_1=-\sqrt{\frac{\alpha+\Pi'(h)}{\tau}},\,\,\lambda_2=-\sqrt{\frac{\gamma}{\beta}},\,\,
\lambda_{3}=\ldots=\lambda_{2d+1}=0,\,\,\lambda_{2d+2}
=\sqrt{\frac{\gamma}{\beta}},\,\,\lambda_{2d+3}=\sqrt{\frac{\alpha+\Pi'(h)}{\tau}}
.
\end{aligned}
\end{equation}
Moreover, the system \eqref{hyperbolic_system} possesses a full set of linearly independent eigenvectors and thus is a strongly hyperbolic system.
\end{lemma}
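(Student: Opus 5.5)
We compute the principal symbol of \eqref{hyperbolic_system} in an arbitrary direction $\mathbf n\in\mathbb S^{d-1}$ and show that, in the reordered variables \eqref{U_vector}, it is block diagonal with blocks that are each diagonalizable with real spectrum. Since the zeroth-order source terms do not affect hyperbolicity, we drop them. We write the system in quasilinear form $\mathbf U_t+\sum_k A_k(\mathbf U)\partial_{x_k}\mathbf U=0$, dividing the second, third and fourth equations by $1/\alpha$, $\tau$ and $\beta$, and using $\nabla\Pi(h)=\Pi'(h)\nabla h$. We then study $A(\mathbf U,\mathbf n)=\sum_k n_k A_k$, i.e.\ only derivatives along $\mathbf n$ are kept. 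Writing $\partial_{\mathbf n}$ for this derivative, the normal system reads
\begin{align*}
h_t+\partial_{\mathbf n} q_1&=0, & \psi_t+\alpha\,\partial_{\mathbf n} q_1&=0, & q_{1,t}+\tfrac{1}{\tau}\bigl(\Pi'(h)\partial_{\mathbf n} h+\partial_{\mathbf n}\psi\bigr)&=0,\\
w_t-\tfrac{\gamma}{\beta}\partial_{\mathbf n} p_1&=0, & p_{1,t}-\partial_{\mathbf n} w&=0, & q_{j,t}=p_{j,t}&=0\quad (j\ge 2).
\end{align*}
The last line holds because the tangential components of $\nabla(\Pi+\psi)$ and $\nabla w$ contain only tangential derivatives.

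In the ordering \eqref{U_vector}, $A(\mathbf U,\mathbf n)=\mathrm{diag}(B_1,B_2,0_{2(d-1)})$ with
\[
B_1=\begin{pmatrix}0&0&1\\0&0&\alpha\\ \Pi'(h)/\tau&1/\tau&0\end{pmatrix},\qquad
B_2=\begin{pmatrix}0&-\gamma/\beta\\-1&0\end{pmatrix}.
\]

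\textbf{Eigenvalues.} A direct computation gives $\det(B_1-\lambda I)=-\lambda\bigl(\lambda^2-(\alpha+\Pi'(h))/\tau\bigr)$ and $\det(B_2-\lambda I)=\lambda^2-\gamma/\beta$. Hence the eigenvalues of $A(\mathbf U,\mathbf n)$ are $\pm\sqrt{(\alpha+\Pi'(h))/\tau}$, which are real because $\alpha+\Pi'(h)>0$; $\pm\sqrt{\gamma/\beta}$; and $0$. The eigenvalue $0$ appears once from $B_1$ and $2(d-1)$ times from the tangential block, giving multiplicity $2d-1$. This is exactly \eqref{eigen_values}. The assumption $\alpha+\Pi'(h)>\gamma\tau/\beta$ is equivalent to $\sqrt{(\alpha+\Pi'(h))/\tau}>\sqrt{\gamma/\beta}$, which ensures that the nonzero eigenvalues are pairwise distinct and ordered as $\lambda_1<\lambda_2<0<\lambda_{2d+2}<\lambda_{2d+3}$.

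\textbf{Eigenvectors.} Each block has distinct eigenvalues and is therefore diagonalizable, and the zero block is trivially diagonal. Explicitly:
\begin{itemize}
\item for $B_1$ and $\lambda=\pm\sqrt{(\alpha+\Pi')/\tau}$, the eigenvector is $(1,\alpha,\lambda)^\top$;
\item for $B_1$ and $\lambda=0$, the eigenvector is $(1,-\Pi'(h),0)^\top$;
\item for $B_2$ and $\lambda=\pm\sqrt{\gamma/\beta}$, the eigenvector is $(-\lambda,1)^\top$;
\item for the zero block, the eigenvectors are the $2(d-1)$ unit vectors $e_{q_j},e_{p_j}$.
\end{itemize}
Embedding these vectors into $\mathbb R^{2d+3}$ in block form gives $2d+3$ linearly independent vectors. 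Since $\mathbf n$ was arbitrary, the system is strongly hyperbolic.

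\textbf{Main obstacle.} The only delicate point is to justify the block decoupling. One must check that the tangential components of $\mathbf q$ and $\mathbf p$ are transported with zero normal speed and do not couple to $(h,\psi,q_1)$ or $(w,p_1)$. This holds because, in direction $\mathbf n$, the operators $\nabla\cdot\mathbf q$ and $\nabla\cdot\mathbf p$ see only $q_1$ and $p_1$. In addition, the nonconservative term $\Pi'(h)\nabla h$ enters only the $q_1$-row of $B_1$.
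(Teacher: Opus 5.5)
Your proof is correct and follows essentially the same route as the paper. Both restrict to the principal symbol in an arbitrary direction $\mathbf n$, reorder into normal and tangential components, and exploit the direct-sum structure with a zero tangential block. You go slightly further by splitting the $5\times5$ block as $B_1\oplus B_2$ and listing all eigenvectors, and your zero eigenvector $(1,-\Pi'(h),0)^\top$ in the $(h,\psi,q_1)$ slots is the correct one (the paper's $R_0$ misplaces the entry $-\Pi'(h)$ in the $p_1$ slot).
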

\begin{proof}
Since the source term in \eqref{hyperbolic_system} does not affect the hyperbolicity of the system \eqref{hyperbolic_system}, we focus only on the principal part of the system for the vector $\mathbf{U}$ as defined in \eqref{U_vector}. The principal part is given by
\[
\mathbf{U}_t+\mathbf{A(U)}\mathbf{U}_{x}=0,
\]
where
\[
\mathbf A(\mathbf U)
=
\begin{pmatrix}
0&0&1&0&0\\[0.2em]
0&0&\alpha&0&0\\[0.2em]
\dfrac{\Pi'(h)}{\tau}&\dfrac{1}{\tau}&0&0&0\\[0.6em]
0&0&0&0&-\dfrac{\gamma}{\beta}\\[0.6em]
0&0&0&-1&0
\end{pmatrix}
\oplus
\mathbf{0}_{(2d-2)\times(2d-2)}.
\]
Here, $\oplus$ denotes the direct sum of the $5\times 5$ nonzero block corresponding to $(h, \psi, q_1, w, p_1)^\top$ and the ${{(2d-2)}\times {(2d-2)}}$ zero block corresponding to $(q_2, \ldots, q_d, p_2, \ldots, p_d)^\top$. A straightforward calculation using the properties of direct sums then leads us to the eigenvalues of $\mathbf{A(U)}$ as noted in \eqref{eigen_values}.

Clearly, under the assumption that $\alpha+\Pi'(h)>0$ and $\alpha+\Pi'(h)> \dfrac{\gamma \tau}{\beta}$, all $2d+3$ eigenvalues are real. Among these $2d+3$ eigenvalues, $\lambda_1, \lambda_2, \lambda_{2d+2}$ and $\lambda_{2d+3}$ are distinct while $\lambda=0$ is a repeated eigenvalue of algebraic multiplicity $2d-1$. In what follows, we prove that the eigenspace corresponding to $\lambda=0$ is of dimension $2d-1$. This is then enough to prove that the matrix $\mathbf{A(U)}$ has a full set of eigenvectors. We now compute the eigenvectors corresponding to zero eigenvalues explicitly. First, the eigenvector corresponding to the zero eigenvalue from the nonzero part of the matrix $\mathbf{A(U)}$ reads as $R_0=\left(1,0,0,0,-\Pi'(h),\mathbf 0_{2d-2}\right)^\top$.

The remaining eigenvectors are those associated with the
tangential components of $\mathbf q$ and $\mathbf p$ and thus come from the zero block of the matrix $\mathbf{A(U)}$. Let $\{\mathbf e_1,\ldots,\mathbf e_{2d-2}\}$ denote the orthonormal basis of the subspace $\mathbb R^{2d-2}$. Then, the eigenvectors corresponding to the remaining zero eigenvalues can be easily obtained using the properties of the direct sum. Precisely, the eigenvectors coming from the zero block read as $R_k^{\perp}=(0, 0, 0, 0, 0, \mathbf{e}_k)$ for $k=1, 2, \ldots, 2d-2$. Clearly, the set of eigenvectors $\{R_k^{\perp}, \, 1\leq k\leq 2d-2\}$ is linearly independent of $R_0$. Thus the geometric multiplicity corresponding to the zero eigenvalue is $2d-1$. The lemma is proved.
\end{proof}
\subsection{A Symmetrizer for the relaxation approximation \eqref{hyperbolic_system} and the local wellposedness}
In this section, we prove that the system \eqref{hyperbolic_system} can be symmetrized using a symmetric positive definite symmetrizer and thus belongs to the class of Friedrichs symmetrizable systems. We prove this in the following lemma.
\begin{lemma}\label{symmetrizer}
Suppose that there exists a number
\(\kappa>0\) such that $\alpha+\Pi'(h)\geq \kappa>0$. Then the first-order system \eqref{hyperbolic_system} is a Friedrichs-symmetrizable system. In particular, the system \eqref{hyperbolic_system} can be symmetrized using a symmetric positive definite matrix.
\end{lemma}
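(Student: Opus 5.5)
The natural route is to read off a symmetrizer from the energy structure of \eqref{hyperbolic_system}. Write the principal part in full multi-dimensional form as $\mathbf U_t+\sum_{k=1}^d \mathbf A_k(\mathbf U)\partial_{x_k}\mathbf U=0$, with $\mathbf U=(h,\psi,\mathbf q,w,\mathbf p)^\top$. We then look for a constant-in-space, state-dependent block-diagonal matrix $\mathbf S(\mathbf U)$ that is symmetric positive definite and for which every product $\mathbf S\mathbf A_k$ is symmetric.

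The weights come from the formal energy. Testing \eqref{hyperbolic_system} with $(\Pi(h)+\psi,\ \cdot\,,\mathbf q,\ w,\ \gamma\mathbf p)$ suggests pairing $h$ and $\psi$ with the gradient in the $\mathbf q$-equation, giving $\mathbf q$ the weight $\tau$, $w$ the weight $\beta$, and $\mathbf p$ the weight $\gamma$. The delicate part is the coupled $(h,\psi)$ block. In the $\mathbf q$-equation, $\nabla(\Pi(h)+\psi)=\Pi'(h)\nabla h+\nabla\psi$. Meanwhile $h_t$ and $\tfrac1\alpha\psi_t$ are both driven by $-\nabla\cdot\mathbf q$.

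Concretely, I would change variables locally to $\mathbf V=(h,\psi/\alpha,\mathbf q,w,\mathbf p)$. The $h$ and $\psi/\alpha$ equations then both read $\partial_t(\cdot)+\nabla\cdot\mathbf q=\dots$, and the $\mathbf q$-equation has gradient coefficients $\Pi'(h)$ on $h$ and $\alpha$ on $\psi/\alpha$. I would choose the top-left block as a $2\times 2$ matrix $\mathbf B$ such that $\mathbf B(1,1)^\top$ is proportional to $(\Pi'(h),\alpha)^\top/\tau$. A natural candidate is $\mathbf B=\mathrm{diag}(\Pi'(h),\alpha)$. However, positivity of that matrix requires $\Pi'(h)>0$, which is not the hypothesis; the hypothesis is only $\alpha+\Pi'\ge\kappa$.

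So I would instead use the general symmetric form
\[
\mathbf B=\begin{pmatrix} a & b\\ b & c\end{pmatrix},\qquad a+b=\Pi'(h),\quad b+c=\alpha,
\]
which leaves one free parameter $b$. Positive definiteness requires $a>0$, $c>0$ and $ac>b^2$. The last condition reads $(\Pi'-b)(\alpha-b)>b^2$, which is equivalent to $\Pi'\alpha>b(\alpha+\Pi')$. Since $\alpha+\Pi'\ge\kappa>0$, I can take $b<\Pi'\alpha/(\alpha+\Pi')$ with $b$ sufficiently negative if necessary, provided $\Pi'\alpha$ is compatible. This compatibility is the step I expect to be the main obstacle. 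If $\Pi'\alpha\le 0$ with only $\alpha+\Pi'>0$, the required inequalities may fail, and one would need either the extra monotonicity $\Pi'\ge\delta$ from \eqref{A.2}, or to absorb the $h$-dependence through the rows of the zero eigenvector $R_0$.

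As a fallback I would use the variable $\phi=h-\psi/\alpha$. In the variables $(h,\phi)$ the $\phi$-equation carries no flux, and the $\mathbf q$-equation has gradient coefficient $(\alpha+\Pi')\nabla h-\alpha\nabla\phi$. The block $\mathrm{diag}((\alpha+\Pi')/\tau\cdot\tau,\ \cdot)$ then works directly with weight $\alpha+\Pi'\ge\kappa$ on $h$ and an arbitrary positive weight on $\phi$, because the $\phi$-gradient term is lower order in the sense of symmetrizability after using $\phi_t=w$. This choice is where the hypothesis $\alpha+\Pi'\ge\kappa$ enters exactly.

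The remaining blocks are straightforward. They are $\tau\mathbf I_d$ for $\mathbf q$ (up to the overall normalization), $\beta$ for $w$, and $\gamma\mathbf I_d$ for $\mathbf p$, which make the $w$–$\mathbf p$ wave coupling symmetric. The final check is that $\mathbf S\mathbf A_k$ is symmetric for each $k$, using the block structure already exhibited in the hyperbolicity lemma in direction $\mathbf n$. I would then note that $\mathbf S$ is smooth in $h$, so the standard Friedrichs/Kato theory applies.
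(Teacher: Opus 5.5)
Your $(w,\mathbf p)$ and $\mathbf q$ weights are fine. The gap is in the coupled $(h,\psi,\mathbf q)$ block: you abandon your first route exactly where it succeeds. That route is the paper's (a full symmetric $2\times 2$ block with one free parameter), and the ``compatibility'' obstacle you anticipate does not exist. In the variables $(h,\psi/\alpha,\mathbf q)$ with $\mathbf q$-weight $\tau$, the conditions $a=\Pi'-b>0$, $c=\alpha-b>0$ and $ac-b^2=\alpha\Pi'-b(\alpha+\Pi')>0$ are all \emph{upper} bounds on $b$. They therefore hold for every sufficiently negative $b$, whatever the sign of $\alpha\Pi'$. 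Since $\alpha+\Pi'>0$, one has $\alpha-\alpha\Pi'/(\alpha+\Pi')=\alpha^2/(\alpha+\Pi')>0$ and $\Pi'-\alpha\Pi'/(\alpha+\Pi')=(\Pi')^2/(\alpha+\Pi')\ge 0$, so $b<\alpha\Pi'/(\alpha+\Pi')$ alone suffices. For an $h$-independent choice, note $\alpha\Pi'/(\alpha+\Pi')=\alpha-\alpha^2/(\alpha+\Pi')\ge\alpha-\alpha^2/\kappa$. Then take $b=\alpha-2\alpha^2/\kappa$ and $c=2\alpha^2/\kappa$, which gives $ac-b^2\ge\alpha^2$. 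Undoing $\psi\mapsto\psi/\alpha$ and multiplying by $\kappa/2$, this is precisely the paper's choice $c=1$, $e/\tau=\kappa/2$, $b=\kappa/2-\alpha$. No appeal to (\ref{A.2}) or to $R_0$ is needed.

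The fallback you would actually use is incorrect as written. With $\phi=h-\psi/\alpha$, the $\mathbf q$-equation is $\tau\mathbf q_t+(\alpha+\Pi'(h))\nabla h-\alpha\nabla\phi=-\mathbf q/\mathcal M(h)$. So the principal matrix $\mathbf A_j$ carries $-(\alpha/\tau)\mathbf e_j$ in the $(\mathbf q,\phi)$ block, while the $\phi$-row (from $\phi_t=w$) is zero. For any diagonal weight $\mathrm{diag}(d_h,d_\phi,e\mathbf I_d)$, the product has $-(e\alpha/\tau)\mathbf e_j\neq 0$ in the $(\mathbf q,\phi)$ block and $0$ in the $(\phi,\mathbf q)$ block, so it is never symmetric. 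The term $\alpha\nabla\phi$ is genuinely principal; the relation $\phi_t=w$ does not make it lower order. Nor can you trade it for $\alpha\mathbf p$, since $\mathbf p=\nabla\phi$ is not an identity of \eqref{hyperbolic_system}: for the data \eqref{initial_data_hyp_relaxation}, $\nabla\phi_0=\nabla h_0+(\gamma/\alpha)\nabla\Delta h_0\neq\mathbf p_0$. The fallback can be repaired only by adding the off-diagonal entry $-e\alpha/\tau$ between $h$ and $\phi$, with $h$-weight $e(\alpha+\Pi')/\tau$ and $\phi$-weight $c'>e\alpha^2/(\tau(\alpha+\Pi'))$, e.g.\ $c'=2e\alpha^2/(\tau\kappa)$. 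That version shows transparently where $\alpha+\Pi'\ge\kappa$ enters. With $e=\tau\kappa/2$, however, it is exactly the paper's symmetrizer rewritten in the coordinates $(h,\phi)$.
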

\begin{proof}
For $\mathbf U
=
\left(h,\psi,\mathbf q,w,\mathbf p\right)^\top\in \mathcal{U},$ the system can be written as
\[
\mathbf U_t
+\sum_{j=1}^d
\mathbf A_j(\mathbf U)\partial_{x_j}\mathbf U
=
\mathbf S(\mathbf U),
\]
where
\[
\mathbf A_j(\mathbf U)
=
\begin{pmatrix}
0&0&\mathbf e_j&0&\mathbf 0_d\\
0&0&\alpha\mathbf e_j&0&\mathbf 0_d\\
\dfrac{\Pi'(h)}{\tau}\mathbf e_j^\top&
\dfrac1\tau\mathbf e_j^\top&
\mathbf 0_{d\times d}&
\mathbf 0_d^\top&
\mathbf 0_{d\times d}\\
0&0&\mathbf 0_d&0&
-\dfrac{\gamma}{\beta}\mathbf e_j\\
\mathbf 0_d^\top&\mathbf 0_d^\top&
\mathbf 0_{d\times d}&
-\mathbf e_j^\top&
\mathbf 0_{d\times d}
\end{pmatrix}.
\]
To symmetrize the system \eqref{hyperbolic_system} by a symmetric positive definite matrix $\mathbf{A_0(U)}$, define for numbers $c, e>0$ the matrix
\[
\mathbf A_0(\mathbf U)
=
\begin{pmatrix}
a(h)&b&\mathbf 0_d&0&\mathbf 0_d\\
b&c&\mathbf 0_d&0&\mathbf 0_d\\
\mathbf 0_d^\top&\mathbf 0_d^\top&
e\mathbf I_{d\times d}&
\mathbf 0_d^\top&\mathbf 0_{d\times d}\\
0&0&\mathbf 0_d&1&\mathbf 0_d\\
\mathbf 0_d^\top&\mathbf 0_d^\top&
\mathbf 0_{d\times d}&
\mathbf 0_d^\top&
\dfrac{\gamma}{\beta}\mathbf I_{d\times d}
\end{pmatrix}.
\]
Note that the matrix $\mathbf A_0(\mathbf U)$ has the block structure
\[
\mathbf A_0(\mathbf U)
=
\begin{bmatrix}
\mathbf S_1(\mathbf U)
&
\mathbf 0_{(d+2)\times(d+1)}
\\
\mathbf 0_{(d+1)\times(d+2)}
&
\mathbf S_2(\mathbf U)
\end{bmatrix},
\]
where
\[
\mathbf S_1(\mathbf U)
=
\begin{bmatrix}
a(h)&b&\mathbf 0_d\\
b&c&\mathbf 0_d\\
\mathbf 0_d^\top&\mathbf 0_d^\top&
e\mathbf I_{d\times d}
\end{bmatrix},
\qquad
\mathbf S_2(\mathbf U)
=
\begin{bmatrix}
1&\mathbf 0_d\\
\mathbf 0_d^\top&
\dfrac{\gamma}{\beta}\mathbf I_{d\times d}
\end{bmatrix}.
\]
Note that the subblock $\mathbf{S}_2(\mathbf{U})$ is already a symmetric positive definite matrix, which symmetrizes the wave operator subblock of the system \eqref{hyperbolic_system}, and thus we proceed to obtain the conditions for the positive definiteness of the other subblock $\mathbf{S}_1(\mathbf{U})$ such that $\mathbf{S}_1(\mathbf{U})$ symmetrizes the $(h,\psi,\mathbf q)$-subsystem.

For a fixed $j\in\{1,\ldots,d\}$, the principal matrix of the
$(h,\psi,\mathbf q)$-subsystem in the $x_j$-direction is
\[
\mathbf A_{1,j}(\mathbf U)
=
\begin{bmatrix}
0&0&\mathbf e_j\\
0&0&\alpha\mathbf e_j\\
\dfrac{\Pi'(h)}{\tau}\mathbf e_j^\top&
\dfrac1\tau\mathbf e_j^\top&
\mathbf 0_{d\times d}
\end{bmatrix}.
\]
Multiplying $\mathbf A_{1,j}(\mathbf U)$ by
$\mathbf S_1(\mathbf U)$, we obtain
\begin{align*}
\mathbf S_1(\mathbf U)\mathbf A_{1,j}(\mathbf U)
&=
\begin{bmatrix}
a(h)&b&\mathbf 0_d\\
b&c&\mathbf 0_d\\
\mathbf 0_d^\top&\mathbf 0_d^\top&
e\mathbf I_{d\times d}
\end{bmatrix}
\begin{bmatrix}
0&0&\mathbf e_j\\
0&0&\alpha\mathbf e_j\\
\dfrac{\Pi'(h)}{\tau}\mathbf e_j^\top&
\dfrac1\tau\mathbf e_j^\top&
\mathbf 0_{d\times d}
\end{bmatrix}
\\
&=
\begin{bmatrix}
0&0&\bigl(a(h)+\alpha b\bigr)\mathbf e_j\\
0&0&\bigl(b+\alpha c\bigr)\mathbf e_j\\
\dfrac{e\Pi'(h)}{\tau}\mathbf e_j^\top&
\dfrac e\tau\mathbf e_j^\top&
\mathbf 0_{d\times d}
\end{bmatrix}.
\end{align*}
Clearly, the matrix $\mathbf{S_1(U)}\mathbf{A_1(U)}$ is symmetric iff $a(h)+b\alpha=\dfrac{e\Pi'(h)}{\tau}$ and $b+c\alpha=\dfrac{e}{\tau}$ or equivalently $a(h)=\alpha^2 c+\dfrac{e}{\tau}(\Pi'(h)-\alpha)$ hold. Moreover, the matrix $\mathbf{S_1(U)}$ is positive definite iff $a(h)>0$ and $a(h)c-b^2=\dfrac{e}{\tau^2}(c\tau(\alpha+\Pi'(h))-e)>0$ hold. These two inequalities together imply that
\[
\dfrac{e}{c\tau}<\min\bigg\{\dfrac{\alpha^2}{\alpha-\Pi'(h)}, (\alpha+\Pi'(h))\bigg\}.
\]
Thus, the necessary condition for $\mathbf{S}_1(\mathbf{U})$ to be positive definite reduces to
\[
\dfrac{e}{c\tau}<\alpha+\Pi'(h).
\]
Choosing $c=1$, $\alpha+\Pi'(h)\geq \kappa>0$ and $\dfrac{e}{\tau}=\dfrac{\kappa}{2}$, the matrix $\mathbf{A_0(U)}$ then becomes
\[
\mathbf A_0(\mathbf U)
=
\begin{bmatrix}
\alpha^2+\dfrac{\kappa}{2}\bigl(\Pi'(h)-\alpha\bigr)
&
\dfrac{\kappa}{2}-\alpha
&
\mathbf 0_d&0&\mathbf 0_d
\\[0.8em]
\dfrac{\kappa}{2}-\alpha
&
1
&
\mathbf 0_d&0&\mathbf 0_d
\\[0.8em]
\mathbf 0_d^\top&\mathbf 0_d^\top&
\dfrac{\tau\kappa}{2}\mathbf I_{d\times d}
&
\mathbf 0_d^\top&\mathbf 0_{d\times d}
\\[0.8em]
0&0&\mathbf 0_d&1&\mathbf 0_d
\\[0.3em]
\mathbf 0_d^\top&\mathbf 0_d^\top&
\mathbf 0_{d\times d}&
\mathbf 0_d^\top&
\dfrac{\gamma}{\beta}\mathbf I_{d\times d}
\end{bmatrix}.
\]
Clearly, for $\kappa>0$, $\mathbf{A_0(U)}$ is a symmetric positive definite symmetrizer of the system \eqref{hyperbolic_system}.
\end{proof}
In view of these explicit symmetrizers, we now state the first result of this article.
\begin{theorem}[Local well-posedness of smooth solutions of \eqref{hyperbolic_system}-\eqref{initial_data_hyp_relaxation}]
\label{local_wellposedness}
\leavevmode\\
Let $d\in\mathbb{N}$ and let the initial data
\[
(h_0,\psi_0,\mathbf{q}_0,w_0,\mathbf{p}_0)\in (H^m(\mathbb{R}))^{2d+3},\qquad 
m>\frac{d}{2}+1,\qquad h_0>0 \ \text{a.e.}
\]
Let the functions
$\mathcal{M},\Pi$ satisfy Assumption (\ref{assumption.A}). Then there exists $T^*\in(0,\infty)$ depending only on the initial data, for which the Cauchy problem 
associated with \eqref{hyperbolic_system} and initial data \eqref{initial_data_hyp_relaxation} admits a unique solution. Moreover, the 
solution satisfies
\[
(h^{\alpha, \tau}, \psi^{\alpha, \tau},\mathbf{q}^{\alpha, \tau}, w^{\alpha, \tau}, \mathbf{p}^{\alpha, \tau}) \in C\!\big([0,T^*];(H^m(\mathbb{R}))^{2d+3}\big)
\cap C^1\!\big([0,T^*];(H^{m-1}(\mathbb{R}))^{2d+3}\big),~h>0~\text{a.e.}
\]
\end{theorem}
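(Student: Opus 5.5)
The plan is to invoke the classical local existence theory for Friedrichs-symmetrizable quasilinear hyperbolic systems (Kato, Majda, Taylor). Lemma~\ref{symmetrizer} supplies the symmetric positive definite symmetrizer $\mathbf A_0(\mathbf U)$. The system \eqref{hyperbolic_system} is written as
\[
\mathbf U_t+\sum_{j=1}^d \mathbf A_j(\mathbf U)\partial_{x_j}\mathbf U=\mathbf S(\mathbf U),
\]
where $\mathbf S(\mathbf U)$ collects the zero-order terms: $-w$ in the $\psi$-equation (scaled by $\alpha$), $-\mathbf q/(\tau\mathcal M(h))$, and $\psi/\beta$.

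First I check the structural hypotheses of the abstract theorem on a convex open set $\mathcal G\subset\mathcal U$ containing the closure of the range of $\mathbf U_0$. By Sobolev embedding ($m>d/2+1$, so $H^m\hookrightarrow C^1$), $h_0$ is continuous. I will assume $h_0\ge c_0>0$, which is the natural reading of ``$h_0>0$''. Then I take
\[
\mathcal G=\{h>c_0/2\}\times\mathbb R^{2d+2}.
\]
On $\mathcal G$ the following hold:
\begin{enumerate}[label=(\roman*)]
\item $\mathbf A_j$ and $\mathbf A_0$ are $C^\infty$ in $\mathbf U$, since only $\Pi'(h)$ enters and $\Pi\in C^\infty$ by (\ref{A.2}).
\item Since $\Pi'\ge\delta$, one has $\alpha+\Pi'(h)\ge\kappa:=\alpha+\delta>0$, so Lemma~\ref{symmetrizer} applies uniformly. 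Using $|\Pi'|<L$, $\mathbf A_0$ is bounded above and below by positive constants on compact subsets of $\mathcal G$.
\item $\mathbf A_0\mathbf A_j$ is symmetric.
\item $\mathbf S$ is smooth, because $\mathcal M\in C^\infty$ with $\mathcal M>1/m_0$, so $1/\mathcal M(h)$ is smooth with bounded derivatives.
\end{enumerate}

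The core step is the standard energy estimate. Apply $\partial^k$ for $|k|\le m$, take the inner product with $\mathbf A_0(\mathbf U)\partial^k\mathbf U$, and control the commutators $[\partial^k,\mathbf A_j(\mathbf U)]\partial_{x_j}\mathbf U$ with Moser-type calculus inequalities. This yields
\[
\frac{d}{dt}E_m(t)\le C\bigl(\|\mathbf U\|_{W^{1,\infty}}\bigr)E_m(t),\qquad E_m=\sum_{|k|\le m}\int \partial^k\mathbf U^\top\mathbf A_0\partial^k\mathbf U.
\]
The source term is handled by the Moser composition estimate $\|\mathbf S(\mathbf U)\|_{H^m}\le C(\|\mathbf U\|_{L^\infty})(1+\|\mathbf U\|_{H^m})$. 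This form of the bound uses that $\mathbf S$ is linear in $\psi,w$ and smooth in $(h,\mathbf q)$ on $\mathcal G$; the $\mathbf q$-term only needs local smoothness because $\mathbf q$ stays bounded in $L^\infty$.

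Existence then follows from the usual iteration scheme. I solve linearized symmetric systems with frozen coefficients $\mathbf A_j(\mathbf U^{n})$, obtain uniform $H^m$ bounds on a short interval $[0,T^*]$ from the estimate above, and prove contraction in $L^2$. Passing to the limit gives a solution in $L^\infty(0,T^*;H^m)$; the upgrade to $C([0,T^*];H^m)$ uses the standard weak-continuity plus energy-equality argument. Uniqueness comes from an $L^2$ energy estimate for the difference of two solutions. The regularity $C^1([0,T^*];H^{m-1})$ is read off from the equation, since $\mathbf A_j(\mathbf U)\partial_j\mathbf U\in H^{m-1}$ when $m-1>d/2$.

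Positivity of $h$ is preserved on a possibly smaller interval. I use $h\in C([0,T^*];H^m)\hookrightarrow C([0,T^*];C^0)$ and continuity in time to shrink $T^*$ so that $h>c_0/2$, which keeps $\mathbf U$ in $\mathcal G$. The time $T^*$ depends only on $\|\mathbf U_0\|_{H^m}$ and $c_0$ (and on the parameters).

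The main obstacle is compatibility with the well-prepared data \eqref{initial_data_hyp_relaxation}. For these data to lie in $H^m$ one needs $h_0\in H^{m+3}$, because of $\mathbf q_0\sim\nabla\Delta h_0$ and $w_0=\partial_th(\cdot,0)$ which involves four derivatives. I will state this as the regularity requirement on $h_0$. The remaining technical care is keeping the constants uniform over the region of state space visited by the solution, which the bounds in Assumption (\ref{assumption.A}) provide.
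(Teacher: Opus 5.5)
Your proposal follows the paper's route. The paper's entire proof is to cite the classical well-posedness theory for Friedrichs-symmetrizable systems via Lemma~\ref{symmetrizer}. You supply the details it leaves implicit: the choice of state domain, the $H^m$ energy estimate and iteration, and preserving $h>0$ by continuity, which does need your strengthened hypothesis $h_0\ge c_0>0$ rather than $h_0>0$ a.e. One slip: $w_0=\partial_t h(\cdot,0)=-\nabla\cdot\mathbf q_0$ involves four derivatives of $h_0$, so the well-prepared data \eqref{initial_data_hyp_relaxation} lie in $(H^m)^{2d+3}$ only if $h_0\in H^{m+4}$, not $H^{m+3}$.
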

\begin{proof}
In view of Lemma \ref{symmetrizer}, the hyperbolic system \eqref{hyperbolic_system}  is a Friedrichs-symmetrizable hyperbolic system, and thus the local well-posedness of classical solutions for sufficiently smooth initial data is an immediate consequence of the classical theory of the hyperbolic conservation laws \cite{dafermos2005hyperbolic}.   
\end{proof}
\begin{remark}
Note that in the proof of Lemma \ref{symmetrizer}, the only structural condition required is
\[
\alpha+\Pi'(h)>0\quad \forall\,\, h\in [0, \infty).
\] 
In particular, this condition holds for any disjoining pressure satisfying $\Pi'(h)\geq -\delta'$ for some constant $\delta'>0$ and $\alpha>\delta'$. This observation shows that the hyperbolic system \eqref{hyperbolic_system} remains locally well-posed even in situations where the disjoining pressure does not satisfy Assumption (\ref{assumption.A}). In particular, Cahn–Hilliard-type potentials \cite{elliott1996cahn} satisfy $\Pi'(h)\geq -\delta'$ for some $\delta'>0$ and thus the hyperbolic system \eqref{hyperbolic_system} is locally well-posed as an approximate model in this setting as well. This motivates us to present numerical simulations in Section \ref{sec: numerics} for Cahn–Hilliard-type equations. Although our analytical results in the following sections do not cover the case where \eqref{eq: main} involves a non-convex energy functional, the associated hyperbolic relaxation system remains locally well-posed and thus provides a meaningful approximation framework. The main challenge for the non-convex energy functional arises from the fact that there is a mismatch between the free energy variable and its gradient, and thus, the error caused by these non-convex parts of the energy can not be absorbed directly with the relaxation system of the form \eqref{hyperbolic_system} unless one includes some artificial diffusion into the dynamics. This also indicates the need to design hyperbolic relaxation systems, where the free energy variable is tied with the gradient. 
\end{remark}
\subsection{Entropy structure of the system \eqref{hyperbolic_system}}\label{sec: wellposedness}
In this section, we prove that the system \eqref{hyperbolic_system} is equipped with an entropy/entropy-flux pair with a strictly convex entropy under the Assumption (\ref{assumption.A}). 

For $\mathbf{U}\rel\in \mathcal{U}$, we first write the system \eqref{hyperbolic_system} in the balance-law form
\begin{equation}\label{balance_law_form}
    \mathbf{U}\rel_t
    +\sum_{j=1}^{d}
    \partial_{x_j}\mathbf{F}_j(\mathbf{U}\rel)
    =\mathbf{S}(\mathbf{U}\rel),
\end{equation}
where for every $j=1,\ldots,d$, flux functions are defined as
\[
\mathbf{F}_j(\mathbf{U} )
=
\left(
q_j ,
\alpha q_j ,
\frac{\Pi(h )+\psi }{\tau}\mathbf e_j,
-\frac{\gamma}{\beta}p_j ,
-w \mathbf e_j
\right)^\top.
\]
Here, $\mathbf e_j$ denotes the $j$-th canonical basis vector
of $\mathbb R^d$. Moreover, the source term in \eqref{balance_law_form} is given by
\[
\mathbf{S}(\mathbf{U} )
=
\left(
0,
-\alpha w ,
-\frac{\mathbf q }{\tau\mathcal M(h )},
\frac{\psi }{\beta},
\mathbf 0_d
\right)^\top.
\]
By an entropy/entropy-flux pair, we mean a pair 
$(E\rel, \mathbf{Q}\rel): \mathcal{U}\mapsto \mathbb{R}^{d+1}$ such that $E\rel$ is convex  with positive definite Hessian $\nabla^2_{{\mathbf U}} E\rel$ and such that for all $\mathbf{U\rel}\in \mathcal{U}$, the pair $(E\rel, \mathbf{Q}\rel)$ satisfies the compatibility condition
\begin{equation}\label{entropy}
\nabla_{\mathbf{U}} E\rel(\mathbf{U\rel})^\top\mathbf{DF_j(U\rel)}=\nabla_{\mathbf{U\rel}}Q_j\rel(\mathbf{U\rel})^\top, \, j=1, 2, \ldots, d,\,
%, \quad 
\end{equation}
and
 \begin{equation}\label{entropy_dissipation}
\dfrac{d}{dt}E\rel(\mathbf{U\rel})+\nabla\cdot \mathbf{Q}\rel(\mathbf{U\rel})\leq 0
%, \quad 
\end{equation}
in the sense of distributions.
\begin{Lemma}\label{entropy_lemma}
Let $\mathcal{M}$ and $\Pi$ satisfy the Assumption (\ref{assumption.A}). There exists an entropy/entropy flux pair $({E}^{\alpha, \tau}, \mathbf{Q}^{\alpha, \tau})$ with strictly convex entropy ${E}^{\alpha, \tau}$. It is given by
\begin{equation}\label{convex-entropy}
    E^{\alpha, \tau}(\mathbf{U}) =W(h)+\dfrac{({\psi})^2}{2\alpha}+\dfrac{\tau {|\mathbf{q}|}^2}{2}+\dfrac{\beta {w}^2}{2}+\dfrac{\gamma {|\mathbf{p}|}^2}{2}, 
\end{equation}
and
\begin{align}\label{entropy_flux}
    \mathbf{Q}^{\alpha, \tau}(\mathbf{U})= ((\Pi(h)+\psi)\mathbf{q}-\gamma \mathbf{p}w).
\end{align} 
In particular, for any $t\in (0, T]$, under periodic boundary conditions, a classical solution $\mathbf{U}\rel$ of the system \eqref{hyperbolic_system} satisfies the equation
\begin{equation}\label{energy_inequality}
\begin{aligned}
\int_{\Omega}\bigg(W(h\rel)+\dfrac{{\psi\rel}^2}{2\alpha}&+\dfrac{\tau {|\mathbf{q}\rel|}^2}{2}+\dfrac{\beta {w\rel}^2}{2}+\dfrac{\gamma {|\mathbf{p}\rel|}^2}{2}\bigg)\, d\mathbf{x}+\int_{0}^t \int_{\Omega} \dfrac{{|\mathbf{q}\rel|}^2}{\mathcal{M}(h\rel)}\, d\mathbf{x}\, ds\\
&=\int_{\Omega}\left(W(h\rel_0)+\dfrac{{\psi\rel_0}^2}{2\alpha}+\dfrac{\tau {{|\mathbf{q\rel_0}|}}^2}{2}+\dfrac{\beta {w\rel_0}^2}{2}+\dfrac{\gamma {|\mathbf{p\rel_0}|}^2}{2}\right)\, d\mathbf{x}.
\end{aligned}
\end{equation}
\end{Lemma}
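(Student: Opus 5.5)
The proof is a direct computation in three steps: convexity of $E^{\alpha,\tau}$, the compatibility condition \eqref{entropy}, and the energy identity \eqref{energy_inequality}.

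\emph{Convexity.} The entropy \eqref{convex-entropy} is a sum of functions of separate variables, so its Hessian is block diagonal:
\[
\nabla^2_{\mathbf U}E^{\alpha,\tau}=\mathrm{diag}\bigl(W''(h),\tfrac1\alpha,\tau\mathbf I_d,\beta,\gamma\mathbf I_d\bigr).
\]
Since $W'=\Pi$, we have $W''(h)=\Pi'(h)\ge\delta>0$ by (\ref{A.2}). All remaining entries are positive because $\alpha,\tau,\beta,\gamma>0$. Hence the Hessian is positive definite and $E^{\alpha,\tau}$ is strictly convex on $\mathcal U$.

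\emph{Compatibility condition.} The entropy gradient is
\[
\nabla_{\mathbf U}E=\bigl(\Pi(h),\psi/\alpha,\tau\mathbf q,\beta w,\gamma\mathbf p\bigr).
\]
From the flux $\mathbf F_j$ in \eqref{balance_law_form}, the product $\nabla_{\mathbf U}E^\top D\mathbf F_j$ is computed component by component.
\begin{itemize}
\item The $h$-component of $\mathbf F_j$ is $q_j$ and the $\psi$-component is $\alpha q_j$. Contracting with $\Pi(h)$ and $\psi/\alpha$ gives $(\Pi(h)+\psi)\,\partial q_j$.
\item The $\mathbf q$-block is $\frac{\Pi(h)+\psi}{\tau}\mathbf e_j$. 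Contracting with $\tau\mathbf q$ gives $q_j\,\partial(\Pi(h)+\psi)$.
\item The $w$-component is $-\frac{\gamma}{\beta}p_j$. Contracting with $\beta w$ gives $-\gamma w\,\partial p_j$.
\item The $\mathbf p$-block is $-w\mathbf e_j$. Contracting with $\gamma\mathbf p$ gives $-\gamma p_j\,\partial w$.
\end{itemize}
The sum is exactly the differential of $Q_j=(\Pi(h)+\psi)q_j-\gamma p_jw$, which verifies \eqref{entropy} with the flux \eqref{entropy_flux}.

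\emph{Entropy production.} For a classical solution, the chain rule and compatibility give
\[
\partial_tE+\nabla\cdot\mathbf Q=\nabla_{\mathbf U}E\cdot\mathbf S(\mathbf U).
\]
Evaluating the right-hand side term by term:
\begin{itemize}
\item the $\psi$-entry contributes $\frac{\psi}{\alpha}(-\alpha w)=-\psi w$;
\item the $\mathbf q$-entry contributes $\tau\mathbf q\cdot\bigl(-\frac{\mathbf q}{\tau\mathcal M(h)}\bigr)=-\frac{|\mathbf q|^2}{\mathcal M(h)}$;
\item the $w$-entry contributes $\beta w\cdot\frac{\psi}{\beta}=\psi w$;
\item the $h$- and $\mathbf p$-entries contribute nothing.
\end{itemize}
The two coupling terms $\pm\psi w$ cancel, so
\[
\partial_tE^{\alpha,\tau}+\nabla\cdot\mathbf Q^{\alpha,\tau}=-\frac{|\mathbf q|^2}{\mathcal M(h)}\le0,
\]
using $\mathcal M\ge 1/m_0>0$ from (\ref{A.1}). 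This gives \eqref{entropy_dissipation}. Integrating over $\Omega$ with periodic boundary conditions removes the divergence term, and integrating in time over $(0,t)$ yields \eqref{energy_inequality}.

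The only step needing real care is this source cancellation together with the sign of $\mathbf S$. The $w$-equation in \eqref{hyperbolic_system} reads $\beta w_t=\gamma\nabla\cdot\mathbf p+\psi$, so its source entry is $+\psi/\beta$. The $\psi$-equation has source $-\alpha w$. These signs must be matched precisely so that the $\psi w$ terms cancel.

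A minor point concerns the second entry of $\nabla_{\mathbf U}E$, namely $\psi/\alpha$. The flux in the $\psi$-equation is $\alpha\mathbf q$, and the factor $\alpha$ cancels against $1/\alpha$. This cancellation, not a scaling issue, is what produces the $\psi\,\partial q_j$ term in the compatibility check.
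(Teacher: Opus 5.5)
Your proof is correct and follows essentially the same route as the paper. Like the paper, you compute $\nabla_{\mathbf U}E^{\alpha,\tau}$, check the compatibility condition against the flux Jacobians $\mathbf{DF}_j$, evaluate $\nabla_{\mathbf U}E^{\alpha,\tau}\cdot\mathbf S$ to get the $\pm\psi w$ cancellation, and integrate over the torus and in time. Your convexity argument (a diagonal Hessian with $W''=\Pi'\geq\delta>0$) simply spells out what the paper states as an immediate consequence of the monotonicity assumption on $\Pi$.
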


\begin{proof}
In what follows, we prove that the pair $(E\rel, \mathbf{Q\rel})$ as defined in \eqref{convex-entropy}-\eqref{entropy_flux} satisfies the compatibility relation \eqref{entropy} and the dissipation identity \eqref{entropy_dissipation}. First, we compute the gradient of $E\rel$ with respect to $\mathbf U\rel$. Precisely, we have
\begin{align}\label{eq: gradient_entropy}
\nabla_{\mathbf U}E^{\alpha,\tau}(\mathbf U\rel)
=
\left(
\Pi(h\rel),
\frac{\psi\rel}{\alpha},
\tau\mathbf q\rel,
\beta w\rel,
\gamma\mathbf p\rel
\right)^\top.
\end{align}
For a fixed $j\in\{1,\ldots,d\}$, the Jacobian of the $j$-th flux computes as
\[
 \mathbf{DF_j}(\mathbf U\rel)
=
\begin{pmatrix}
0 & 0 & \mathbf e_j & 0 & \mathbf 0_d\\
0 & 0 & \alpha\mathbf e_j & 0 & \mathbf 0_d\\
\dfrac{\Pi'(h\rel)}{\tau}\mathbf e_j^\top
&
\dfrac{1}{\tau}\mathbf e_j^\top
&
\mathbf{0}_{d\times d}
&
\mathbf 0_d^\top
&
\mathbf{0}_{d\times d}
\\
0 & 0 & \mathbf 0_d & 0
&-\dfrac{\gamma}{\beta}\mathbf e_j\\
\mathbf 0_d^\top & \mathbf 0_d^\top & \mathbf{0}_{d\times d}
&-\mathbf e_j^\top&\mathbf{0}_{d\times d}
\end{pmatrix}.
\]
Thus, using \eqref{eq: gradient_entropy}, we have
\begin{align*}
\nabla_{\mathbf U}E^{\alpha,\tau}(\mathbf U\rel)^\top \mathbf{DF_j}(\mathbf U\rel)=
\left(
q_j\rel\Pi'(h\rel),
q_j\rel,
\bigl(\Pi(h\rel)+\psi\rel\bigr)\mathbf e_j,
-\gamma p_j\rel,
-\gamma w\rel\mathbf e_j
\right).
\end{align*}
Now define
\[
Q_j^{\alpha,\tau}(\mathbf U )
=
\bigl(\Pi(h )+\psi \bigr)q_j 
-\gamma p_j  w .
\]
Its gradient is
\[
\nabla_{\mathbf U}Q_j^{\alpha,\tau}(\mathbf U )
=
\left(
q_j \Pi'(h ),
q_j ,
\bigl(\Pi(h )+\psi \bigr)\mathbf e_j,
-\gamma p_j ,
-\gamma w \mathbf e_j
\right)^\top.
\]
Therefore,
\[
\nabla_{\mathbf U}E^{\alpha,\tau}(\mathbf U )^\top
 \mathbf{DF_j}(\mathbf U )
=
\nabla_{\mathbf U}Q_j^{\alpha,\tau}
(\mathbf U )^\top.
\]
Since $j\in\{1,\ldots,d\}$ was arbitrary, the entropy flux is
\[
\mathbf Q^{\alpha,\tau}(\mathbf U )
=
\bigl(\Pi(h )+\psi \bigr)\mathbf q 
-\gamma\mathbf p  w.
\]
The strict convexity of
$E^{\alpha,\tau}$ is an immediate consequence of the Assumption~(\ref{A.2}),

We now derive the entropy identity \eqref{entropy_dissipation}. Multiplying
\eqref{balance_law_form} by
$\nabla_{\mathbf U}E^{\alpha,\tau}(\mathbf U\rel)^\top$
and using the entropy compatibility condition gives
\[
\dfrac{d}{dt}E^{\alpha,\tau}(\mathbf U\rel)
+\nabla\cdot
\mathbf Q^{\alpha,\tau}(\mathbf U\rel)
=
\nabla_{\mathbf U}E^{\alpha,\tau}(\mathbf U\rel)^\top
\mathbf S(\mathbf U\rel).
\]
The contribution of the source term is
\begin{align*}
\nabla_{\mathbf U}E^{\alpha,\tau}(\mathbf U\rel)^\top
\mathbf S(\mathbf U\rel)&\quad=
\frac{\psi\rel}{\alpha}(-\alpha w\rel)
+\tau\mathbf q\rel\cdot
\left(
-\frac{\mathbf q\rel}
{\tau\mathcal M(h\rel)}
\right)
+\beta w\rel\frac{\psi\rel}{\beta}
\\
&\quad=
-\psi\rel w\rel
-\frac{|\mathbf q\rel|^2}{\mathcal M(h\rel)}
+\psi\rel w\rel
\\
&\quad=
-\frac{|\mathbf q\rel|^2}{\mathcal M(h\rel)}.
\end{align*}
We therefore obtain the local entropy identity
\begin{equation}\label{local_entropy_identity}
\dfrac{d}{dt}E^{\alpha,\tau}(\mathbf U\rel)
+\nabla\cdot
\mathbf Q^{\alpha,\tau}(\mathbf U\rel)
+\frac{|\mathbf q\rel|^2}{\mathcal M(h\rel)}
=0.
\end{equation}
In view of Assumption (\ref{A.1}), it is easy to obtain the dissipation identity \eqref{entropy_dissipation} in the sense of distributions. 

Moreover, integrating \eqref{local_entropy_identity} over $\Omega$ and using the
periodic boundary conditions give
\[
\int_\Omega
\nabla\cdot
\mathbf Q^{\alpha,\tau}(\mathbf U\rel)
\,d\mathbf x
=0.
\]
It follows that
\[
\frac{d}{dt}
\int_\Omega
E^{\alpha,\tau}(\mathbf U\rel)\,d\mathbf x
+
\int_\Omega
\frac{|\mathbf q\rel|^2}{\mathcal M(h\rel)}
\,d\mathbf x
=0.
\]
Finally, integrating over the time interval $(0,T)$, we obtain
\[
\int_\Omega
E^{\alpha,\tau}(\mathbf U\rel(\cdot, T))\,d\mathbf x
+
\int_0^T\int_\Omega
\frac{|\mathbf q\rel|^2}{\mathcal M(h\rel)}
\,d\mathbf x\,ds
=
\int_\Omega
E^{\alpha,\tau}(\mathbf U\rel_0)\,d\mathbf x,
\]
which is precisely \eqref{energy_inequality}.
\end{proof}
\begin{remark}
Note that the integral of the entropy as defined in \eqref{convex-entropy} converges to the energy \eqref{energy} of the fourth-order PDE \eqref{eq: main} as the relaxation parameter vanishes. Thus, the integrated entropy of the hyperbolic system can also be considered as the energy of the system. This implies that the hyperbolic system \eqref{hyperbolic_system} is an energy-consistent relaxation system for \eqref{eq: main}.
\end{remark}
Since the relaxation system \eqref{hyperbolic_system} is a nonlinear system of balance laws, we can't expect it to have global smooth solutions. Therefore, for the discussions to follow, we define weak solutions of the hyperbolic system \eqref{hyperbolic_system} as follows.
\begin{definition}\label{weak_soln}
Let $T>0$ and $\mathbf{U}\rel_0=\left(h_0, \psi_0, \mathbf{q}_0, w_0, \mathbf{p}_0\right)^\top\in L^\infty_{\mathrm{loc}}(\Omega;\mathbb{R}^{2d+3})$ be given.  We call a function $\mathbf{U}\rel \in L^\infty_{\mathrm{loc}}\big([0,T)\times\Omega;\,\mathbb{R}^{2d+3}\big)$ a weak solution of the system \eqref{hyperbolic_system} with initial data 
$\mathbf{U}\rel_0$, if for every vector-valued function ${\bm \varphi} \in C_0^\infty\big([0,T)\times\Omega;\,\mathbb{R}^{2d+3}\big)$, the following identity holds
\begin{equation*}
    \int_{0}^{T}\int_{\Omega} 
    \Big( \mathbf{U}\rel\cdot {\bm \varphi}_t
          + \mathbf{F}(\mathbf{U}\rel)\cdot {\bm \varphi}_x
          +\mathbf{S}(\mathbf{U}\rel)\cdot {\bm \varphi} \Big)\,d\mathbf{x}\,dt
    + \int_{\Omega} \mathbf{U}\rel_0(\mathbf{x})\cdot {\bm \varphi}(\mathbf{x}, 0)\,d\mathbf{x} = 0.
\end{equation*}
\end{definition}
Moreover, in view of explicit entropy/entropy pairs of \eqref{hyperbolic_system}, we define weak entropy solutions of \eqref{hyperbolic_system} as follows.
\begin{definition}\label{entropy_soln}
We call a function $\mathbf{U}\rel\in {L}_{loc}^{\infty}((0,T)\times \Omega; ~\mathbb{R}^{2d+3} )$ a weak  entropy solution of the system \eqref{hyperbolic_system} associated with the entropy \eqref{convex-entropy} if it is a weak solution and if 
\begin{equation}
\begin{aligned}\hspace*{-0.56cm}
    \int_{0}^{T}\int_{\Omega} \bigg(E^{\alpha, \tau}(\mathbf{U}\rel)\varphi_t+Q^{\alpha, \tau}(\mathbf{U}\rel)\varphi_x&+\nabla_{\mathbf{U}}E^{\alpha, \tau}(\mathbf{U}\rel)\cdot \mathbf{S(U\rel)}\varphi\bigg) \, d\mathbf{x}\, dt\\
    &+\int_{\Omega} \left(E^{\alpha, \tau}(\mathbf{U}\rel_0)\varphi(\mathbf{x}, 0)\right)\, d\mathbf{x}\geq 0. 
\end{aligned}
\end{equation}
holds for all non-negative functions $\varphi \in C^\infty_0([0,T)\times \Omega)$.
\end{definition}
\section{Asymptotic limits of the relaxation approximation \eqref{hyperbolic_system}}\label{sec: convergence}
In this section, we utilize the entropy structure of the approximation system \eqref{hyperbolic_system} to prove the convergence of weak entropy solutions of the hyperbolic system \eqref{hyperbolic_system} towards sufficiently regular solutions of the Cauchy problem \eqref{eq: main}-\eqref{initial_data_main}. Let us denote the solution of the approximate system \eqref{hyperbolic_system} as $\mathbf{U}\rel=(h\rel, \psi\rel, \mathbf{q}\rel, w\rel , \mathbf{p}\rel)^\top\in \mathcal{U}$ and the comparison solution by $\mathbf{{U}}=({h}, {\psi},{\mathbf{q}}, {w}, {\mathbf{p}})^\top=({h}, -\gamma \Delta{h},-\mathcal{M}({h})\nabla(\Pi({h})-\gamma \Delta{h}), {h}_t, \nabla{h})^\top\in \mathcal{U}$ with $h$ being the classical solution of \eqref{eq: main}-\eqref{initial_data_main}. Then we prove the following main result of this article.
\begin{theorem}[Convergence of solutions of the relaxation approximation]\label{main_result}
Let $T>0$ and $\Pi, \mathcal{M}\in C^\infty(\mathbb{R})$ satisfy the Assumption (\ref{assumption.A}). Further, let ${h}\in H^4((0, T)\times \Omega)$ with $\nabla \cdot(\mathcal{M}(h)\nabla(\gamma \Delta h))\in L^\infty([0, T]; \Omega)$, ${h}_{tt}\in L^2([0, T]; L^2( \Omega))$ be a solution of \eqref{eq: main}-\eqref{initial_data_main} with initial data ${h}_0\in H^8(\Omega)$. Let for each $\alpha, \beta, \tau>0$, $\mathbf{U}\rel=(h^{\alpha, \tau}, \psi^{\alpha, \tau},\mathbf{q}^{\alpha, \tau}, w^{\alpha, \tau}, \mathbf{p}^{\alpha, \tau})^\top\in \mathcal{U}$  be weak entropy solution of \eqref{hyperbolic_system} with well-prepared initial data \eqref{initial_data_hyp_relaxation}. \\
Then
    \begin{align}\label{convergence_rates}
        \lVert h^{\alpha, \tau}-{h}\rVert_{L^{\infty}([0, T]; L^2(\Omega))}+ \lVert \mathbf{p}^{\alpha, \tau}-\mathbf{\nabla}{h}\rVert_{L^{\infty}([0, T]; L^2(\Omega))}&+ \lVert \mathbf{q}^{\alpha, \tau}-\mathcal{M}({h})\nabla(\Delta {h}-\Pi({h}))\rVert_{L^{2}([0, T]; L^2(\Omega))}\nonumber\\
        &=\mathcal{O}\left(\sqrt{\dfrac{1}{\alpha}}\right)+\mathcal{O}\left(\sqrt{{\beta}}\right)+\mathcal{O}\left({\tau}\right).
    \end{align}
\end{theorem}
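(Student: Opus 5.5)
We compare the weak entropy solution $\mathbf U\rel$ with the lifted smooth solution $\mathbf U=(h,-\gamma\Delta h,-\mathcal M(h)\nabla(\Pi(h)-\gamma\Delta h),h_t,\nabla h)^\top$ of \eqref{eq: main}. The tool is the relative entropy
\[
E\rel(\mathbf U\rel|\mathbf U)=E\rel(\mathbf U\rel)-E\rel(\mathbf U)-\nabla_{\mathbf U}E\rel(\mathbf U)\cdot(\mathbf U\rel-\mathbf U),
\]
which, by \eqref{convex-entropy}, equals
\[
W(h\rel|h)+\frac{(\psi\rel-\psi)^2}{2\alpha}+\frac{\tau|\mathbf q\rel-\mathbf q|^2}{2}+\frac{\beta(w\rel-w)^2}{2}+\frac{\gamma|\mathbf p\rel-\mathbf p|^2}{2}.
\]
By (\ref{A.2}), $W(h\rel|h)\ge \frac{\delta}{2}|h\rel-h|^2$, so this quantity controls the first two norms in \eqref{convergence_rates}.

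\textbf{Step 1: residual system for $\mathbf U$.} The vector $\mathbf U$ solves \eqref{hyperbolic_system} exactly in the $h$- and $\mathbf p$-equations. The other equations hold up to residuals:
\begin{itemize}
\item $\frac1\alpha\psi_t+\nabla\cdot\mathbf q+w=\frac1\alpha\psi_t$ (using $\nabla\cdot\mathbf q=-h_t=-w$);
\item $\tau\mathbf q_t+\nabla(\Pi(h)+\psi)+\mathbf q/\mathcal M(h)=\tau\mathbf q_t$;
\item $\beta w_t-\gamma\nabla\cdot\mathbf p-\psi=\beta h_{tt}$.
\end{itemize}
The regularity assumptions ($h\in H^4$, $h_{tt}\in L^2L^2$, the $L^\infty$ bound on $\nabla\cdot(\mathcal M\nabla\gamma\Delta h)$, and $h_0\in H^8$ to control $\psi_t=-\gamma\Delta h_t$ and $\mathbf q_t$) make these residuals bounded in $L^2(0,T;L^2)$.

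\textbf{Step 2: relative entropy inequality.} Take the entropy inequality of Definition~\ref{entropy_soln}, subtract the exact energy identity for $\mathbf U$, and subtract the weak formulation tested with $\nabla_{\mathbf U}E\rel(\mathbf U)=(\Pi(h),\psi/\alpha,\tau\mathbf q,\beta w,\gamma\mathbf p)$. This is a Lipschitz test function, so it can be inserted after a standard approximation argument with a time cutoff. The result has the standard form
\[
\frac{d}{dt}\int E\rel(\mathbf U\rel|\mathbf U)\le -\int \nabla^2E(\mathbf U)\,\partial_j\mathbf U\cdot \mathbf F_j(\mathbf U\rel|\mathbf U)+\text{(source terms)}+\text{(residual terms)}.
\]
Since all fluxes except $\Pi(h)$ are linear, the only relative flux is $\Pi(h\rel|h)\,\mathbf e_j/\tau$. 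The weight $\tau$ in $\nabla E$ cancels the $1/\tau$, leaving a term $\int\Pi(h\rel|h)\nabla\cdot\mathbf q$. This is bounded by $L\|\nabla\cdot\mathbf q\|_{L^\infty}|h\rel-h|^2\lesssim W(h\rel|h)$; here the $L^\infty$ hypothesis and $\Pi''<L$ are used.

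\textbf{Step 3: source terms.} The $\psi w$ cross terms cancel exactly, as in Lemma~\ref{entropy_lemma}. The mobility term produces
\[
-\int\Big(\frac{\mathbf q\rel}{\mathcal M(h\rel)}-\frac{\mathbf q}{\mathcal M(h)}\Big)\cdot(\mathbf q\rel-\mathbf q).
\]
Split it as
\[
-\int\frac{|\mathbf q\rel-\mathbf q|^2}{\mathcal M(h\rel)}+\int\mathbf q\Big(\frac1{\mathcal M(h)}-\frac1{\mathcal M(h\rel)}\Big)\cdot(\mathbf q\rel-\mathbf q).
\]
The first piece is a good dissipative term $D$. Using (\ref{A.1}) and Young's inequality, the second piece is bounded by $\frac12 D+C\|\mathbf q\|_{L^\infty}^2|h\rel-h|^2$. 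I expect this to be the \textbf{main obstacle}. Upper bounds on $\mathcal M(h\rel)$ are not available from weak $L^\infty_{loc}$ information, so one needs $1/\mathcal M$ Lipschitz, i.e. $|\mathcal M'|/\mathcal M^2\le Lm_0^2$, while $D$ is weighted by $1/\mathcal M(h\rel)$. I would handle this by choosing the Young weight with $\mathcal M(h\rel)$ inside, writing $|(\mathcal M(h\rel)-\mathcal M(h))/(\mathcal M(h)\mathcal M(h\rel))|\le Lm_0/\mathcal M(h\rel)\cdot|h\rel-h|$. A lower bound $D\gtrsim\|\mathbf q\rel-\mathbf q\|^2$ for the third norm in \eqref{convergence_rates} would additionally require an upper bound on $\mathcal M$ over the range of $h\rel$. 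I would assume this, e.g. from the $L^\infty_{loc}$ bound.

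\textbf{Step 4: residuals and Gronwall.} The residual terms are
\[
\int(\psi\rel-\psi)\psi_t/\alpha,\qquad \tau\int(\mathbf q\rel-\mathbf q)\cdot\mathbf q_t,\qquad \beta\int(w\rel-w)h_{tt}.
\]
Bound them by Young's inequality:
\begin{itemize}
\item $\frac{(\psi\rel-\psi)^2}{2\alpha}+\frac{C}{\alpha}|\psi_t|^2$;
\item $\frac14 D+C\tau^2|\mathbf q_t|^2$, using the dissipation, which yields $\tau^2$ and hence the $\mathcal O(\tau)$ rate;
\item $\frac{\beta}{2}(w\rel-w)^2+\frac\beta2 h_{tt}^2$.
\end{itemize}
Well-prepared data \eqref{initial_data_hyp_relaxation} make the initial relative entropy vanish. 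Gronwall then gives
\[
\sup_t\int E\rel(\mathbf U\rel|\mathbf U)+\int_0^T D\le C(\alpha^{-1}+\beta+\tau^2),
\]
and taking square roots yields \eqref{convergence_rates}.
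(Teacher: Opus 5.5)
Your proposal follows the paper's route. It uses the residual system \eqref{reformulated_thin_film} with residuals $\frac1\alpha\psi_t$, $\tau\mathbf q_t$, $\beta h_{tt}$, and the single relative-flux term $\Pi(h^{\alpha,\tau}|h)\,\nabla\cdot\mathbf q$. As in the paper, the $\psi w$ source terms cancel, the $\alpha$- and $\beta$-residuals are handled by Young's inequality, the initial relative entropy is zero, and Gronwall closes the argument.

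Where you deviate, in the $\mathbf q$-terms, your version is the sound one and the paper's is not:
\begin{enumerate}
\item In \eqref{q_estimates} the paper ends with the coefficient $-\bigl(1/\mathcal M(h^{\alpha,\tau})-m_0\bigr)$. This is positive because $1/\mathcal M<m_0$, yet it is then dropped.
\item The paper's Young split of the $\tau$-residual in \eqref{I_4} leaves $\frac12|\mathbf q^{\alpha,\tau}-\mathbf q|^2$. The relative entropy controls this only with a factor $1/\tau$.
\item The paper reads the $\mathbf q$-rate off \eqref{relative_entropy_relation_main}, which bounds only $\tau\|\mathbf q^{\alpha,\tau}-\mathbf q\|^2$.
\end{enumerate}
You instead keep $\int_0^T D$ on the left and put the weight $1/\mathcal M(h^{\alpha,\tau})$ into Young's inequality. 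Then the cross term needs only the lower bound and the Lipschitz bound from (\ref{A.1}). This is what actually closes the estimate and produces the $\tau^2$.

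Two points remain.

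First, the missing upper bound on $\mathcal M(h^{\alpha,\tau})$ already enters Step 4, not only the third norm. The step $\tau(\mathbf q^{\alpha,\tau}-\mathbf q)\cdot\mathbf q_t\le\frac14 D+C\tau^2|\mathbf q_t|^2$ forces $C\simeq\mathcal M(h^{\alpha,\tau})$. This can be repaired using only (\ref{A.1}) and a little more regularity of $h$. Write $\mathcal M(h^{\alpha,\tau})\le\mathcal M(h)+L|h^{\alpha,\tau}-h|$ and $L\tau^2|h^{\alpha,\tau}-h|\,|\mathbf q_t|^2\le\frac{L\tau^2}{2}\bigl(|h^{\alpha,\tau}-h|^2+|\mathbf q_t|^4\bigr)$. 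The first part goes into Gronwall through $W(h^{\alpha,\tau}|h)\ge\frac\delta2|h^{\alpha,\tau}-h|^2$, provided $\mathbf q_t\in L^4(\Omega_T)$. With this fix, your bounds on $h^{\alpha,\tau}-h$ and $\mathbf p^{\alpha,\tau}-\nabla h$ are complete.

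Second, for $\|\mathbf q^{\alpha,\tau}-\mathbf q\|_{L^2(0,T;L^2(\Omega))}$ you genuinely need $\mathcal M(h^{\alpha,\tau})\le C$ with $C$ independent of $\alpha,\beta,\tau$. The $L^\infty_{\mathrm{loc}}$ membership in Definition~\ref{weak_soln} does not provide this, because that bound may depend on, and degenerate with, the parameters. Your $\mathcal O$-constants would then depend on $\|h^{\alpha,\tau}\|_{L^\infty}$. Without a parameter-uniform $L^\infty$ bound on $h^{\alpha,\tau}$ (or a bounded mobility), your argument gives only $\int_0^T\!\int_\Omega|\mathbf q^{\alpha,\tau}-\mathbf q|^2/\mathcal M(h^{\alpha,\tau})=\mathcal O(\alpha^{-1}+\beta+\tau^2)$. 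From this, Cauchy--Schwarz with $\int\!\!\int\mathcal M(h^{\alpha,\tau})\le C$ yields an $L^1(\Omega_T)$ rate. The paper does not supply this uniform bound either, so it has to be added as a hypothesis for the third term of \eqref{convergence_rates}.
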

\begin{proof}[Proof of Theorem \ref{main_result}]
In view of Assumption (\ref{assumption.A}), the entropy $E^{\alpha, \tau}:\mathcal{U}\mapsto \mathbb{R}$ from \eqref{convex-entropy} is a uniformly convex function. Therefore, we define the associated  relative entropy $\mathcal{E}^{\alpha, \tau}:\mathcal{U}\times \mathcal{U}\mapsto \mathbb{R}$ for the approximate solution ${\mathbf U}\rel$  and the smooth function $\mathbf{{U}}:= \mathbf{U}(\mathbf{x}, t)\in \mathcal{U}$ as \\
\begin{align}\label{rel_entropy_hyp_system} \hspace*{-0.3cm}
    \mathcal{E}^{\alpha, \tau}(\mathbf{U}\rel| \mathbf{ U})&=W(h\rel)-W({h})-\Pi({h})(h\rel-{h})+\dfrac{\gamma (\mathbf{p}\rel-\mathbf{{p}})^2}{2}\nonumber\\
    &\quad+\dfrac{ \beta(w\rel-{w})^2}{2}+\dfrac{(\psi\rel-{\psi})^2}{2\alpha}+\dfrac{\tau(\mathbf{q\rel-{q}})^2}{2}.
\end{align}
Moreover, the uniform convexity of $E\rel$ allows us to get the bound \\
\begin{align}\label{relative_entropy_relation_main}
    C\bigg((h\rel-{h})^2+\dfrac{1}{\alpha}(\psi\rel-{\psi})^2+\beta(w\rel-{w})^2&+(\mathbf{p}\rel-{\mathbf{p}})^2+\tau (\mathbf{q}\rel-\mathbf{{q}})^2\bigg)\nonumber\\
    &\leq \mathcal{E}^{\alpha, \tau}(\mathbf{U}\rel|\mathbf{{U}}),
\end{align}
where $C>0$ is an $\alpha-$ and $\tau$-independent constant which depends only on the initial data.

In order to prove the theorem, we first rewrite the limit equation \eqref{eq: main} as 
\begin{equation}\label{reformulated_thin_film}
\begin{aligned}
{h}_t+\nabla\cdot \mathbf{{q}}&=0,\\
    \dfrac{1}{\alpha}{\psi}_t+\nabla\cdot \mathbf{{q}}&=-{w}-\dfrac{\gamma}{\alpha}  \partial_t \Delta{h},\\
    \tau \mathbf{{q}}_t+\nabla(\Pi({h})+{\psi})&=-\dfrac{\mathbf{{q}}}{\mathcal{M}({h})}-\tau \left(\mathcal{M}({h})\nabla(\Pi({h})-\gamma \Delta h)\right)_t,\\
    \beta {w}_t-\gamma\nabla\cdot \mathbf{{p}}&={\psi}+\beta {h}_{tt},\\
    \mathbf{{p}}_t-\nabla{w}&=0.
\end{aligned}
\end{equation}
We observe that $\mathbf{{U}}=({h}, {\psi},{\mathbf{q}}, {w}, {\mathbf{p}})^\top=({h}, -\gamma \Delta{h},-\mathcal{M}({h})\nabla(\Pi({h})-\gamma \Delta{h}), {h}_t, \nabla{h})^\top$ is a classical solution of \eqref{reformulated_thin_film} with well-prepared initial data (see \eqref{initial_data_hyp_relaxation})
\begin{equation}\label{initial_data_hyp}
\mathbf U_0=\mathbf U(\cdot,0)=
\bigl(h_0,-\gamma\Delta h_0,-\mathcal M(h_0)\nabla(\Pi(h_0)-\gamma\Delta h_0),\partial_t h(\cdot,0),\nabla h_0\bigr)^\top.
\end{equation}
It is then easy to show 
that for  
the entropy/entropy-flux  pair $(E^{\alpha, \tau}, \mathbf{Q}^{\alpha, \tau})$
from Lemma \ref{entropy_lemma}, the function ${\mathbf U}$ satisfies  the integral identity 
\begin{equation}
\begin{aligned} \hspace*{-0.3cm}
    \int_{0}^{T}\int_{\Omega} \bigg(E^{\alpha, \tau}(\mathbf{{U}})\varphi_t+\sum_{j=1}^{d}Q^{\alpha, \tau}_j(\mathbf{{U}})\varphi_{x_j}+\nabla_{\mathbf{{U}}}E^{\alpha, \tau}(\mathbf{{U}})\cdot &\mathbf{{S}({U})\varphi}\big) \, d\mathbf{x}\, dt\\
    &+\int_{\Omega} E^{\alpha, \tau}(\mathbf{{U}}_0)\varphi( x,0)\, d\mathbf{x}= 0\label{entrop_identity_dispersion}
\end{aligned}
\end{equation}
 for all $\varphi \in C_{0}^{1}([0,T)\times\Omega)$, $\varphi \ge 0$.
In \eqref{entrop_identity_dispersion} we used  
\[
\mathbf{{S}({{\mathbf {{U}}}})}=\bigg(0, -\alpha{w}- \gamma \partial_t(\Delta{h}),  -\dfrac{\mathbf{q}}{\tau \mathcal{M}({h})}-\ \left(\mathcal{M}({h})\nabla(\Pi({h})-\gamma \Delta h)\right)_t,{\beta}^{-1}{\psi}+{h}_{tt}, 0\bigg)^\top.
\]
Moreover, since $\mathbf{U}\rel$ is a weak entropy solution of  \eqref{hyperbolic_system}, \eqref{initial_data_hyp_relaxation}, we have by Definition \ref{entropy_soln}
\begin{equation}\label{rel_entropy_identity_hyp_system}
\begin{aligned}
%\begin{array}{rcl}
% \lefteqn{ \hspace*{-5cm}
\int_{0}^{T}\int_{\Omega} \Bigg(E^{\alpha, \tau}(\mathbf{U}\rel)\varphi_t+\sum_{j=1}^{d}Q^{\alpha, \tau}_j(\mathbf{U}\rel){\varphi_{x_j}}&+\nabla_{\mathbf{U}}E^{\alpha, \tau}(\mathbf{U}\rel)\cdot \mathbf{S(U\rel)}\varphi\Bigg) \, d\mathbf{x}\, dt\\
  &+\int_{\Omega} \left(E^{\alpha, \tau}(\mathbf{U}\rel_0)\varphi(x,0)\right)\, d\mathbf{x}\geq 0. 
 %   \end{array}
 \end{aligned}
\end{equation}
Now, we subtract \eqref{entrop_identity_dispersion} from \eqref{rel_entropy_identity_hyp_system} to obtain the following integral relation
\begin{equation}\label{difference_idenity}
\begin{aligned}
    &\hspace*{-2cm}\int_{0}^{T}\int_{\Omega} \left(E^{\alpha, \tau}(\mathbf{U}\rel)-E^{\alpha, \tau}(\mathbf{{U}}))\varphi_t+\sum_{j=1}^{d}(Q_j^{\alpha, \tau}(\mathbf{U}\rel)-Q^{\alpha, \tau}_j(\mathbf{{U}}))\varphi_{x_j}\right) \, d\mathbf{x}\, dt\\
+&\int_{0}^{T}\int_{\Omega}\bigg(\nabla_{\mathbf{U}}E^{\alpha, \tau}(\mathbf{U}\rel)\cdot \mathbf{S(U\rel)}-\nabla_{\mathbf{{U}}}E^{\alpha, \tau}(\mathbf{{U}})\cdot \mathbf{{S}({U})}\bigg)\varphi \, d\mathbf{x}\, dt\\
+&\int_{\Omega} \left(E^{\alpha, \tau}(\mathbf{U}_0^{\alpha, \tau})-E^{\alpha, \tau}(\mathbf{{U}}_0)\right)\varphi(\mathbf{x}, 0)\, d\mathbf{x}\geq 0. 
\end{aligned}
\end{equation}
Moreover, since $\mathbf{{U}}$ is a classical solution of \eqref{reformulated_thin_film}, it must also be a distributional solution of the system \eqref{reformulated_thin_film}. Thus, if  we  choose  for  $\varphi \in C^\infty_0([0,T) \times \Omega )$ the test function ${\bm \varphi}=\varphi  \nabla_{\mathbf{U}} E^{\alpha, \tau}(\mathbf{{U}})\in  C^1_0([0,T) \times \Omega; \mathbb{R}^{2d+3})$ like in Definition \ref{weak_soln}, we obtain from the fact that $\mathbf{U}\rel$ is a weak solution of \eqref{hyperbolic_system}-\eqref{initial_data_hyp_relaxation}, the identity
\begin{equation}\label{weak_relative}
\begin{aligned}
    &\int_{0}^{T}\int_{\Omega} (\mathbf{U}\rel-\mathbf{{U}})\cdot (\varphi \nabla_{\mathbf{U}} E^{\alpha, \tau}(\mathbf{{U}}))_t+\sum_{j=1}^{d}(\mathbf{F}_j\mathbf{(U\rel)}-\mathbf{F}_j\mathbf{({U})}\cdot(\varphi\nabla_{\mathbf{U}} E^{\alpha, \tau}(\mathbf{{U}}))_{x_j}\, d\mathbf{x}\, dt\\
    &\qquad\qquad\qquad\qquad+\int_{0}^{T}\int_{\Omega} (\mathbf{S(U\rel)}-\mathbf{{S}({U})})\cdot (\varphi \nabla_{\mathbf{U}} E^{\alpha, \tau}(\mathbf{{U}})) \, d\mathbf{x}\, dt \\
    &\qquad\qquad\qquad\qquad+\int_{\Omega} (\mathbf{U}\rel_0-\mathbf{{U}}_0)\cdot (\varphi\nabla_{\mathbf{U}} E^{\alpha, \tau}(\mathbf{{U}}))( x, 0)\, d\mathbf{x}= 0. 
\end{aligned}
\end{equation}
Let $\nabla_{\mathbf U}^2 E^{\alpha,\tau}$ denote the Hessian matrix of
$E^{\alpha,\tau}$. Then, by the chain rule,
\[
\partial_t\big(\nabla_{\mathbf U}E^{\alpha,\tau}(\mathbf U)\big)
=
\nabla_{\mathbf U}^2E^{\alpha,\tau}(\mathbf U)\,\partial_t\mathbf U,
\qquad
\partial_{x_j}\big(\nabla_{\mathbf U}E^{\alpha,\tau}(\mathbf U)\big)
=
\nabla_{\mathbf U}^2E^{\alpha,\tau}(\mathbf U)\,\partial_{x_j}\mathbf U,
\quad j=1,\dots,d.
\]
Thus, we get after  subtracting \eqref{weak_relative} from \eqref{difference_idenity}  the identity 
\begin{equation}\label{difference_idenity_weak_form}
\begin{aligned}
    &I:=\int_{0}^{T}\int_{\Omega} \left(\mathcal{E}^{\alpha, \tau}(\mathbf{U}\rel|\mathbf{{U}}))\varphi_t+\sum_{j=1}^{d}(\mathcal{Q}^{\alpha, \tau}_j(\mathbf{U}\rel|\mathbf{{U}}))\varphi_{x_j}\right) \, d\mathbf{x}\, dt\\
&~-\underbrace{\int_{0}^{T}\int_{\Omega} \left(\big(\mathbf{U}\rel-\mathbf{{U}}\big)\cdot\big( \nabla^2_{\mathbf{{U}}} E^{\alpha, \tau}(\mathbf{U}\big)\mathbf{{U}}_t) +\sum_{j=1}^{d}\big(\mathbf{F}_j\mathbf{(U\rel)}-\mathbf{F}_j\mathbf{({U})})\cdot(\nabla^2_\mathbf{U} E^{\alpha, \tau}(\mathbf{{U}}) \mathbf{{U}}_{x_j}\big)\right) \varphi\, d\mathbf{x}\, dt}_{=:I_1}\\
&~+\underbrace{\int_{0}^{T}\int_{\Omega}\left(\nabla_{\mathbf{U}}E^{\alpha, \tau}(\mathbf{U}\rel)-\nabla_{\mathbf{U}}E^{\alpha, \tau}(\mathbf{{U}})\right)\cdot \mathbf{S(U\rel)} \varphi\, d\mathbf{x}\, dt}_{=:I_2}+\int_{\Omega} \left(\mathcal{E}^{\alpha, \tau}(\mathbf{U}_0\rel|\mathbf{{U}}_0)\right)\varphi(x,0)\, d\mathbf{x}\geq 0. 
\end{aligned}
\end{equation}
Here we used the relative entropy flux components
\[
\mathcal{Q}^{\alpha, \tau}_j(\mathbf{U}\rel| {\mathbf U})= Q^{\alpha, \tau}_j(\mathbf{U}\rel)-Q^{\alpha, \tau}_j(\mathbf{{U}})-\nabla_{\mathbf{U}}E^{\alpha, \tau}(\mathbf{{U}})\cdot (\mathbf{F}_j(\mathbf{U}\rel)-\mathbf{F}_j\mathbf{({U})}), \]
which satisfy for an $\alpha$-independent constant $C\ge0$ the inequality
$|\mathcal{Q}_j^{\alpha, \tau}(\mathbf{U}\rel| {\mathbf U})|\leq C |\mathcal{E}^{\alpha, \tau}(\mathbf{U}\rel| {\mathbf U})|.
$
Now, we focus on the integrals $I_1$ and $I_2$ in
\eqref{difference_idenity_weak_form}. First, consider $I_1$.
Using the strong form of \eqref{reformulated_thin_film}, that is,
\[
\mathbf U_t+\sum_{j=1}^{d}\mathbf {DF_j}(\mathbf U)\mathbf U_{x_j}
=
\mathbf S(\mathbf U),
\]
we obtain
\begin{equation}\label{I_1}
\begin{aligned}
I_1
&=
\int_0^T\int_\Omega
\sum_{j=1}^{d}
\Big(
\mathbf F_j(\mathbf U\rel)-\mathbf F_j(\mathbf U)-\mathbf {DF_j}(\mathbf U)(\mathbf U\rel-\mathbf U)
\Big)\cdot
\big(\nabla_{\mathbf U}^2E^{\alpha,\tau}(\mathbf U)\mathbf U_{x_j}\big)
\,\varphi\,d\mathbf x\,dt\\
&\quad
+\underbrace{\int_0^T\int_\Omega
\big(\nabla_{\mathbf U}^2E^{\alpha,\tau}(\mathbf U)(\mathbf U\rel-\mathbf U)\big)\cdot
\mathbf S(\mathbf U)
\,\varphi\,d\mathbf x\,dt}_{=:I_3},
\end{aligned}
\end{equation}
where we used the symmetry of the Hessian matrix.\\
Now, using the explicit expressions for the fluxes $\mathbf F_j$ and the entropy
$E^{\alpha,\tau}$, one obtains using a straightforward calculation that
\begin{equation}\label{taylor_remainder_F}
\begin{aligned}
&\sum_{j=1}^{d}
\Big(
\mathbf F_j(\mathbf U\rel)-\mathbf F_j(\mathbf U)-\mathbf {DF_j}(\mathbf U)(\mathbf U\rel-\mathbf U)
\Big)\cdot
\big(\nabla_{\mathbf U}^2E^{\alpha,\tau}(\mathbf U)\mathbf U_{x_j}\big)\\
&\qquad=
\big(\Pi(h\rel)-\Pi(h)-\Pi'(h)(h\rel-h)\big)\,\nabla\cdot \mathbf q.
\end{aligned}
\end{equation}
Moreover, $I_2$ from \eqref{difference_idenity_weak_form} and $I_3$ from \eqref{I_1} together become
\begin{equation}
    \begin{aligned}
        I_2-I_3&=-\nabla^2_{\mathbf{U}}E^{\alpha, \tau}(\mathbf{{U}})(\mathbf{U\rel}-\mathbf{{U}}))\cdot \mathbf{{S}({U})}+\left(\nabla_{\mathbf{U}}E^{\alpha, \tau}(\mathbf{U}\rel)-\nabla_{\mathbf{U}}E^{\alpha, \tau}(\mathbf{{U}})\right)\cdot \mathbf{S(U\rel)}\\
        &=\dfrac{\gamma}{\alpha} (\psi\rel-{\psi})\partial_t\Delta{h}-\beta (w\rel-{w}){h}_{tt}\\
   &-(\mathbf{q\rel-{q}})\cdot \left(\dfrac{\mathbf{{q\rel}}}{\mathcal{M}(h\rel)}-\dfrac{\mathbf{{q}}}{\mathcal{M}({h})}\right)+\tau (\mathbf{q\rel-{q}})\cdot \left(\mathcal{M}({h})\nabla(\Pi({h})+{\psi})\right)_t.
    \end{aligned}
\end{equation}
Now $I$ in \eqref{difference_idenity_weak_form} reduces to  
\begin{equation}\label{difference_idenity_weak_form_simplified}
\begin{aligned}
\hspace*{-1cm}I=&    \int_{0}^{T}\int_{\Omega} \left(\mathcal{E}^{\alpha, \tau}(\mathbf{U}\rel|\mathbf{{U}})\varphi_t+\sum_{j=1}^{d}(\mathcal{Q}^{\alpha, \tau}_j(\mathbf{U}\rel|\mathbf{{U}}))\varphi_{x_j}\right) \, d\mathbf{x}\, dt\\
&-\int_{0}^{T}\int_{\Omega} \big(\Pi(h\rel)-\Pi({h})-\Pi'({h})(h\rel-{h})\big)
     \big(\nabla\cdot \mathbf{q}\big)\varphi\, d\mathbf{x}\, dt\\
&+\int_{0}^{T}\int_{\Omega}\left(\dfrac{\gamma}{\alpha} (\psi\rel-{\psi})\partial_t\Delta{h}-\beta (w\rel-{w}){h}_{tt}\right)\varphi\, d\mathbf{x}\, dt\\
&-\int_{0}^{T}\int_{\Omega}\left((\mathbf{q\rel-{q}})\cdot \left(\dfrac{\mathbf{{q\rel}}}{\mathcal{M}(h\rel)}-\dfrac{\mathbf{{q}}}{\mathcal{M}({h})}\right)\right)\varphi\, d\mathbf{x}\, dt\\
   &+\int_{0}^{T}\int_{\Omega}\left(\tau (\mathbf{q\rel-{q}})\cdot \left(\mathcal{M}({h})\nabla(\Pi({h})+{\psi})\right)_t\right)\varphi\, d\mathbf{x}\, dt\\
   &+\int_{\Omega} \mathcal{E}^{\alpha, \tau}(\mathbf{U}_0\rel|\mathbf{{U}}_0)\varphi(x,0)\, d\mathbf{x}\geq 0. 
\end{aligned}
\end{equation}
Since we are using periodic boundary conditions, we get for the test function $\varphi(x,t)=\vartheta(t)\in C_{0}^{1}([0, T))$ the derivatives  $\vartheta_t=\vartheta'(t)$ and $\vartheta_x=0$. In such a situation, the integral identity \eqref{difference_idenity_weak_form_simplified} simplifies to
\begin{equation}\label{difference_idenity_periodic}
\begin{aligned}
    &\hspace*{-0.2cm}\int_{0}^{T}\int_{\Omega} \mathcal{E}^{\alpha, \tau}(\mathbf{U}\rel| {\mathbf U})\vartheta'\, d\mathbf{x}\, dt+\int_{\Omega} \left(\mathcal{E}^{\alpha, \tau}(\mathbf{U}_0\rel| {\mathbf U}_0)\right)\vartheta(\cdot, 0)\, d\mathbf{x}\\
&\geq \int_{0}^{T}\int_{\Omega} \big(\Pi(h\rel)-\Pi({h})-\Pi'({h})(h\rel-{h})\big)
     \big(\nabla\cdot \mathbf{q}\big)\vartheta\, d\mathbf{x}\, dt\\
& -\int_{0}^{T}\int_{\Omega}\left(\dfrac{\gamma}{\alpha} (\psi\rel-{\psi})\partial_t\Delta{h}-\beta (w\rel-{w}){h}_{tt}+(\mathbf{q\rel-{q}})\cdot \left(\dfrac{\mathbf{{q\rel}}}{\mathcal{M}(h\rel)}-\dfrac{\mathbf{{q}}}{\mathcal{M}({h})}\right)\right)\vartheta\, d\mathbf{x}\, dt\\
   &-\int_{0}^{T}\int_{\Omega}\left(\tau (\mathbf{q\rel-{q}})\cdot \left(\mathcal{M}({h})\nabla(\Pi({h})+{\psi})\right)_t\right)\vartheta\, d\mathbf{x}\, dt. 
\end{aligned}
\end{equation}
Following the proof of Theorem 5.2.1 in \cite{dafermos2005hyperbolic}, we fix the test function $\vartheta$ for some time $\sigma\in (0, T)$. The test function $\vartheta$ for some $\epsilon>0$ is given by
\begin{align}
  \vartheta(t)
  &= 
  \begin{cases}
    1, & 0 \le t < \sigma,\\[0.3em]
    \dfrac{\sigma - t}{\epsilon} + 1, & \sigma \le t \le \sigma + \epsilon,\\[0.3em]
    0, & t \ge \sigma + \epsilon,
  \end{cases} 
\end{align}
such that 
\begin{align}
  \vartheta'(t)
  &= 
  \begin{cases}
    0, & 0 \le t < \sigma,\\[0.3em]
    -\dfrac{1}{\epsilon}, & \sigma < t < \sigma + \epsilon,\\[0.3em]
    0, & t \ge \sigma + \epsilon.
  \end{cases}
\end{align}
Clearly, \eqref{difference_idenity_periodic} then reduces to 
\begin{equation}\label{difference_idenity_periodic_updated}
\begin{aligned}
    \hspace*{-0.2cm}\dfrac{1}{\epsilon}\int_{\sigma}^{\sigma+\epsilon}\int_{\Omega} \mathcal{E}^{\alpha, \tau}(\mathbf{U}\rel| {\mathbf U})\, d\mathbf{x}\, dt\leq&\int_{\Omega} \mathcal{E}^{\alpha, \tau}(\mathbf{U}_0\rel| {\mathbf U}_0)\, d\mathbf{x}-\underbrace{\int_{0}^{\sigma}\int_{\Omega}\mathcal{R}\vartheta(t)\, d\mathbf{x}\, dt}_{=:I_4}-\underbrace{\int_{\sigma}^{\sigma+\epsilon}\int_{\Omega}\mathcal{R}\vartheta(t)\, d\mathbf{x}\, dt}_{=:I_5},
\end{aligned}
\end{equation}
where
\begin{align}\label{reamainder_R}
\mathcal{R}&=\big(\Pi(h\rel)-\Pi({h})-\Pi'({h})(h\rel-{h})\big)
     \big(\nabla\cdot \mathbf{q}\big)-\left(\tau (\mathbf{q\rel-{q}})\cdot \left(\mathcal{M}({h})\nabla(\Pi({h})+{\psi})\right)_t\right)\nonumber\\
     -&\dfrac{\gamma}{\alpha} (\psi\rel-{\psi})\partial_t\Delta{h}+\beta (w\rel-{w}){h}_{tt}-(\mathbf{q\rel-{q}})\cdot \left(\dfrac{\mathbf{{q\rel}}}{\mathcal{M}(h\rel)}-\dfrac{\mathbf{{q}}}{\mathcal{M}({h})}\right).
\end{align}     
Now for $I_4$, in view of Young's inequality, we have
\begin{equation}\label{I_4}
\begin{aligned}
    I_4&\leq {\lVert \nabla\cdot \mathbf{q}\rVert}_{L^{\infty}((0, \sigma)\times \Omega)} \int_{0}^{\sigma}\int_{\Omega}\left|\big(\Pi(h\rel)-\Pi({h})-\Pi'({h})(h\rel-{h})\big)\right|\, d\mathbf{x}\,dt\\
&+\int_{0}^{\sigma}\int_{\Omega}\left(\dfrac{1}{2\alpha}(\psi\rel-{\psi})^2+\dfrac{\beta}{2} (w\rel-{w})^2+\dfrac{1}{2} (\mathbf{q}\rel-{\mathbf{q}})^2\right)\,d\mathbf{x}\,dt\\
&+\int_{0}^{\sigma}\int_{\Omega}\left(\dfrac{\gamma^2}{2\alpha}(\partial_t\Delta{h})^2+\dfrac{\beta}{2} {h}_{tt}^2+ \dfrac{\tau^2 \left(\mathcal{M}({h})\nabla(\Pi({h})+{\psi})\right)_t^2}{2}\right)\,d\mathbf{x}\,dt\\
&-\underbrace{\int_{0}^{\sigma}\int_{\Omega}(\mathbf{q\rel-{q}})\cdot \left(\dfrac{\mathbf{{q\rel}}}{\mathcal{M}(h\rel)}-\dfrac{\mathbf{{q}}}{\mathcal{M}({h})}\right)\, d\mathbf{x}\, dt}_{=I_6}.
\end{aligned}
\end{equation}
In view of Assumption (\ref{A.2}) we have ${\lVert \Pi''\rVert}_{\infty}\leq L$ and thus
\begin{align}\label{eq: taylor_f}
\big(\Pi(h\rel)-\Pi({h})-\Pi'({h})(h\rel-{h})\big)\leq \dfrac{L}{2}(h\rel-{h})^2.
\end{align}
Moreover, the monotonocity of  $\Pi$ with $\Pi'(h)\geq \delta>0$ ensures
\begin{align}\label{rel_ent_2}
\dfrac{L}{2}(h\rel-{h})^2\leq \dfrac{L}{2\delta}\big(W(h\rel)-W({h})-\Pi({h})(h\rel-{h})\big).
\end{align}
For $I_6$, we have, by Young's inequality, the Lipschitz bound on $\mathcal{M}(h)$ and the lower bound on $\mathcal{M}(h)$ the estimate
\begin{align}\label{q_estimates}
    -\int_{0}^{\sigma}\int_{\Omega}(\mathbf{q\rel-{q}})&\cdot \left(\dfrac{\mathbf{{q\rel}}}{\mathcal{M}(h\rel)}-\dfrac{\mathbf{{q}}}{\mathcal{M}({h})}\right)\, d\mathbf{x}\, dt \nonumber\\
    &=-\int_{\Omega} \dfrac{1}{\mathcal{M}(h\rel)}|\mathbf{q\rel-{q}}|^2\, d\mathbf{x}-\int_{\Omega}\dfrac{(\mathcal{M}(h\rel)-\mathcal{M}({h}))}{\mathcal{M}(h\rel)\mathcal{M}({h})}\mathbf{{q}}\cdot(\mathbf{q\rel}-\mathbf{{q}})\, d\mathbf{x}, \nonumber\\
    &\leq-\int_{\Omega} \dfrac{1}{\mathcal{M}(h\rel)}|\mathbf{q\rel-{q}}|^2\, d\mathbf{x}+\dfrac{L\lVert\mathbf{{q}}\rVert_{\infty}}{m_0^2}\int_{\Omega} |h\rel-{h}|(\mathbf{q\rel}-\mathbf{{q}})\, d\mathbf{x}, \nonumber\\
    &\leq-\int_{\Omega} \left(\dfrac{1}{\mathcal{M}(h\rel)}-m_0\right)|\mathbf{q\rel-{q}}|^2\, d\mathbf{x}+\dfrac{L^2(\lVert\mathbf{{q}}\rVert_{\infty})^2}{m_0^3}\int_{\Omega} (h\rel-{h})^2\, d\mathbf{x}\nonumber\\
    &\leq C\big(\lVert\mathbf{{q}}\rVert_{\infty}, L, m_0\big) \left(W(h\rel)-W({h})-\Pi({h})(h\rel-{h})\right).
\end{align}
Therefore, using \eqref{eq: taylor_f}, \eqref{rel_ent_2} and \eqref{q_estimates} in \eqref{I_4} and in view of the relative entropy from \eqref{rel_entropy_hyp_system}, we have
\begin{align*}
   \hspace{-1 cm} I_4&\leq C\left(L,{\lVert \mathbf{q}\rVert}_{L^{\infty}}, \delta, m_0\right)  {\lVert \nabla\cdot \mathbf{q}\rVert}_{L^{\infty}((0, \sigma)\times \Omega)}\int_{0}^{\sigma}\int_{\Omega}\mathcal{E}^{\alpha, \tau}(\mathbf{U\rel}|\mathbf{{U}})\, d\mathbf{x}\,dt\\
&+\int_{0}^{\sigma}\int_{\Omega}\left(\dfrac{\gamma^2}{2\alpha}(\partial_t\Delta{h})^2+\dfrac{\beta}{2} {h}_{tt}^2+ \dfrac{\tau^2 \left(\mathcal{M}({h})\nabla(\Pi({h})+{\psi})\right)_t^2}{2}\right)\,d\mathbf{x}\,dt,
\end{align*}
where the constant $C\left(L,{\lVert \mathbf{q}\rVert}_{L^{\infty}}, \delta, m_0\right)$ does not depend on the parameters $\alpha$ and $\beta$.\\
Furthermore, for $I_5$ in \eqref{difference_idenity_periodic_updated}, one can apply the Cauchy-Schwarz inequality to obtain the inequality
\begin{align*}
    I_5\leq \sqrt{\epsilon |\Omega| }{\lVert\mathcal{R}\rVert}_{{L}^{2}((\sigma, \sigma+\epsilon)\times \Omega)}\rightarrow 0, \quad \text{as}~\epsilon\rightarrow 0,
\end{align*}
where $\mathcal{R}$ is defined in \eqref{reamainder_R}.

Thus, by letting $\epsilon \rightarrow 0$ in \eqref{difference_idenity_periodic_updated}, we obtain the inequality
\begin{align*}
   \hspace{-0.2 cm} \int_{\Omega} \mathcal{E}^{\alpha, \tau}(\mathbf{U}\rel
    (x, \sigma)| {\mathbf U}(x, \sigma))\, d\mathbf{x}&\leq\int_{\Omega} \mathcal{E}^{\alpha, \tau}(\mathbf{U}_0\rel| {\mathbf U}_0)\, d\mathbf{x}\\
   & \quad+ C\left(L,\lVert \mathbf{q}\rVert_{L^{\infty}}, \delta, m_0\right) {\lVert \nabla\cdot \mathbf{q}\rVert}_{L^{\infty}((0, \sigma)\times \Omega)}\int_{0}^{\sigma}\int_{\Omega}\mathcal{E}^{\alpha, \tau}(\mathbf{U\rel}|\mathbf{{U})}\, d\mathbf{x}\,dt\\
&\quad+\int_{0}^{\sigma}\int_{\Omega}\left(\dfrac{\gamma^2}{2\alpha}(\partial_t\Delta{h})^2+\dfrac{\beta}{2} {h}_{tt}^2+ \dfrac{\tau^2 \left(\mathcal{M}({h})\nabla(\Pi({h})+{\psi})\right)_t^2}{2}\right)\,d\mathbf{x}\,dt.
\end{align*}
Therefore, arguing as in Theorem 5.2.1 in \cite{dafermos2005hyperbolic} and using Gronwall's inequality, one can conclude that
\begin{equation}
\begin{aligned}
 &\hspace{-0.3 cm}\sup_{t\in (0, T)}  \int_{\Omega}\mathcal{E}^{\alpha, \tau}(\mathbf{U}\rel|\mathbf{{U}})\, d\mathbf{x}\\
 &\leq \exp\left(CT{\lVert \nabla\cdot \mathbf{q}\rVert}_{L^{\infty}((0, \sigma)\times \Omega)}\right)\bigg(\int_{\Omega} \mathcal{E}^{\alpha, \tau}(\mathbf{U}_0\rel|\mathbf{{U}}_0)\, d\mathbf{x}+\dfrac{\gamma^2}{2\alpha} {\lVert {\partial_t \Delta h}\rVert}_{{{L}^{2}}(\Omega_T)}^2\\
 &\qquad\qquad \qquad+\dfrac{\beta}{2} {\lVert{h}_{tt}\rVert}_{{{L}^{2}}(\Omega_T)}^2
 + \tau^2{\lVert {\partial_t \nabla \Delta h}\rVert}_{{{L}^{2}}(\Omega_T)}^2 \bigg).
\end{aligned}
\end{equation}
Since we consider well-prepared initial data of the form \eqref{initial_data_hyp_relaxation}, we have $\mathbf{U}\rel_0=\mathbf{{U}}_0$ and thus
\begin{align*}
\int_{\Omega}&\mathcal{E}^{\alpha, \tau}(\mathbf{U}_0\rel|\mathbf{{U}}_0)\,d\mathbf{x}\\
&=W(h_0\rel)-W({h}_0)-\Pi({h}_0)(h_0\rel-{h}_0)+\dfrac{\gamma}{2}(p\rel_0-{p}_0)^2+\dfrac{ \beta}{2}  (w\rel_0-{w}_0)^2+\dfrac{1}{2\alpha}(\psi\rel_0-{\psi}_0)^2=0.
\end{align*}
Then if $\alpha\rightarrow \infty$ and $\tau\rightarrow 0$, we must have $\int_{\Omega}\mathcal{E}^{\alpha, \tau}(\mathbf{U}\rel|\mathbf{{U}})\, d\mathbf{x}\rightarrow 0$. This implies that the well-prepared initial data of the form \eqref{initial_data_hyp_relaxation} leads to the convergence of the solutions of the approximate system \eqref{hyperbolic_system} to the classical solutions of \eqref{eq: main}, \eqref{initial_data_main} as $\alpha\rightarrow \infty$ and $\tau\rightarrow 0$. The convergence rates in \eqref{convergence_rates} are trivial in view of \eqref{relative_entropy_relation_main}.
The theorem is then proved.
\end{proof}
\section{Numerical results}\label{sec: numerics}
In this section, we present a series of numerical experiments covering various physical applications. We begin with a test case for the thin-film equation with zero disjoining pressure. To demonstrate the broad applicability of our relaxation approach, we consider the relaxation approximation for the Cahn–Hilliard equations with both constant and degenerate mobilities and a non-monotone disjoining pressure. Finally, we consider fourth-order equations where strong nonlinear effects are present in the flux function, further illustrating the versatility and scope of the proposed approximation framework.
For the sake of simplicity, we keep our discussion to one dimension only. However, one can easily extend the approach used here to multiple dimensions.
\subsection{Numerical scheme for the limit equation \eqref{eq: main}}
We solve, in one dimension on the computational domain $\Omega= [x_L,x_R]$ with $N$ equally spaced points
$x_i=x_L+(i-1)\dx$, the general equation
\begin{equation}
	h_t + \partial_x F(h)= A_h(\Pi(h)-\gamma h_{xx}), \qquad \text{with} \quad  A_h(\mu):=\big(M(h) \mu_x\big)_x ,
	\label{eq:original_num}
\end{equation}
where an optional convective flux $F$ is added. The pressure term is split, following Eyre \cite{eyre1998unconditionally} 
\begin{equation*}
	\Pi(h) = S h + (\Pi(h) - S h), \qquad S \geq \max \Pi'(h).
\end{equation*}
A classic mass-conservative discretization of the operator $A_h$ writes 
\begin{equation*}
	\prn{A \mu}_i = \frac{1}{\dx^2}\Big[
	M_{i+\hlf}\,(\mu_{i+1}-\mu_i)-M_{i-\hlf}\,(\mu_{i}-\mu_{i-1})\Big],
	\qquad M_{i\pm\hlf}=\tfrac12\big(M_i+M_{i\pm1}\big) 
\end{equation*}
The values $\prn{h_{xx}}_i$ are computed using the second-order finite difference $(Lh)_i=(h_{i+1}-2h_i+h_{i-1})/\dx^2$.
Combining these terms to discretize \eqref{eq:original_num}, taking the convex and fourth-order derivative implicitly and keeping the concave and advection parts explicitly yields
\begin{equation*}
		h_i^{n+1}
		-\dt\,S\,(A^n h^{n+1})_i
		+\gamma\,\dt\,(A^n Lh^{n+1})_i = h_i^n+\dt\,(A^n\prn{\Pi(h^n)-S\,h^n})_i
		-\frac{\dt}{\dx}\big(\hat F^n_{i+\hlf}-\hat F^n_{i-\hlf}\big),
\end{equation*}
where $A^n = A(h^n)$. The implicit operator is then solved by a banded LU factorization.  
The state-dependent mobility is treated by a frozen-coefficient Picard iteration. The
convective term, when present, is added explicitly through a Rusanov flux with MUSCL reconstruction.
\subsection{Numerical scheme for the hyperbolic system}
For the computational domain $\Omega_c=[x_L, x_R]$, the cell centers and interfaces of the $i$th cell are given by
\[
x_i = x_L + \gh{i-\frac{1}{2}}\dx, \qquad
x_{i+1/2}=\dfrac{1}{2}(x_i+x_{i+1}).
\]
Moreover, the time is discretized such that $t^{n+1}=t^n+\Delta t$ with $t^0=0$
and $\Delta t$ is determined using a CFL condition for the hyperbolic system
\eqref{hyperbolic_system}. We denote by $\mathbf{U}_i^n$ the cell average of the
numerical solution over $\Omega_i$ at time $t^n$.
In order to solve the system \eqref{hyperbolic_system_new} numerically, we first
cast it in compact form as
\begin{equation}
	\mathbf{U}^{\alpha,\tau}_t + \mathbf{f}(\mathbf{U}^{\alpha,\tau})_x
	= \mathbf{S}^{\alpha,\tau}(\mathbf{U}^{\alpha,\tau}),
	\label{eq:system_numerics}
\end{equation}
with $\mathbf{U}^{\alpha,\tau}=(h,\mathbf{V})^{T}$, where
$\mathbf{V}=(\psi,q,w,p)^{T}$ collects the film thickness and the relaxation
variables, $\mathbf{f}$ is the first-order flux and
$\mathbf{S}^{\alpha,\tau}$ the stiff relaxation source. We use the
semi-implicit Lax--Wendroff scheme introduced in~\cite{barthwal2025relaxation},
to which we refer for its derivation and properties. In short, the first-order
hyperbolic operator is advanced by an explicit two-step Lax--Wendroff method,
chosen for its low numerical dissipation and well suited to the fast
characteristic speeds of the system. The stiff relaxation source is treated
implicitly, all the more so as additional stiffness is introduced by the mobility function. In Richtmyer form, the intercell
states are first predicted at the half-time level,
\begin{align}
	\mathbf{U}_{i\pm\hlf}^{n+\hlf} = \hlf\prn{\mathbf{U}_{i}^n + \mathbf{U}_{i\pm1}^n}
	\mp \frac{\Delta t}{2\Delta x}\prn{\mathbf{f}(\mathbf{U}_{i\pm1}^n)-\mathbf{f}(\mathbf{U}_{i}^n)}
	+ \frac{\Delta t}{2}\,\mathbf{S}\left(\mathbf{U}_{i\pm\hlf}^{n+\hlf}\right).
	\label{eq:LW_step1}
\end{align}
Owing to the block structure of the system, this implicit step can be solved
without any nonlinear iteration. Splitting the unknowns as
$\mathbf{U}=(h,\mathbf{V})^T$, the mass balance for $h$ is sourceless and
therefore updates explicitly,
\begin{align}
	h_{i\pm\hlf}^{n+\hlf} = \hlf\prn{h_{i}^n + h_{i\pm1}^n}
	\mp \frac{\Delta t}{2\Delta x}\prn{\mathbf{f}_1(\mathbf{U}_{i\pm1}^n)-\mathbf{f}_1(\mathbf{U}_{i}^n)}.
	\label{eq:LW_step1_h}
\end{align}
Once $h_{i\pm\hlf}^{n+\hlf}$ is known, the inverse mobility
$\mathcal{M}\left(h_{i\pm\hlf}^{n+\hlf}\right)^{-1}$ becomes a known coefficient, so the
source acting on $\mathbf{V}$ is linear and the remaining equations
of~\eqref{eq:LW_step1} can be inverted directly,
\begin{align}
	\mathbf{V}_{i\pm\hlf}^{n+\hlf} = \frac{1}{2}\prn{\mathbf I - \frac{\dt}{2}\mathbf{S}_{\mathbf{V}}}^{-1}
	\prn{\prn{\mathbf{V}_{i}^n + \mathbf{V}_{i\pm1}^n}
		\mp \frac{\Delta t}{\Delta x}\prn{\mathbf{f}_{\mathbf{V}}(\mathbf{U}_{i\pm1}^n)-\mathbf{f}_{\mathbf{V}}(\mathbf{U}_{i}^n)}},
\end{align}
where $\mathbf{S}_{\mathbf{V}}=\mathbf{S}_{\mathbf{V}}\left(h_{i\pm\hlf}^{n+\hlf}\right)$ is
the now constant source matrix of the $\mathbf{V}$-subsystem.
\subsection{Test 1: Thin film in near-rupture regime}
\label{ssec:deadcore}
We first consider the initial value problem for a lubrication model \cite{grun2000nonnegativity} given by
\begin{equation*}
	\begin{cases}
		h_t = \prn{h^2 h_{xxx}}_{x},\\[2pt]
		h(x,0) = h_0 + (x - x_0)^4,
	\end{cases}
\end{equation*}
where $h_0=10^{-3}$ and $x_0 = 0.5$. In one dimension, the PDE is equipped with the boundary conditions \cite{grun2000nonnegativity}
\begin{equation}
	h_x = h_{xxx} = 0, \qquad \text{at} \quad x=0 \ \text{and}\  x=1.
	\label{eq:BC_TEST2}
\end{equation}
The film moves along the wall and a zone where $h$ becomes close to zero in the center before slowly progressing towards equilibrium. For the mobility power exponent $n=2$, the solution remains strictly positive, and no vacuum zone is created. 
For the hyperbolic system, in order to reproduce this test, ghost states are considered on each side of the boundary, and we set $q_{\text{gst}} = 0, p_{\text{gst}} = 0$, which mimics \eqref{eq:BC_TEST2}. First-order boundary extrapolation is used for the remaining variables.
The computational domain for both models is $\Omega_c=[0,1]$ discretized over $N=1000$ cells, and $\gamma$ is set to~$1$. The hyperbolic reformulation uses $\lambda=10^{4}$, $\tau=10^{-4}$ and $\beta=10^{-7}$, and a CFL number of $0.9$ while the original model uses a fixed time-step of $\Delta t = 10^{-3}$. The final simulation time is $t=20$.
Figure~\ref{fig:deadcore-h} compares the numerical film thicknesses $h$ and $h^{\alpha,\tau}$ for the two models. It is shown that the hyperbolic model solution $h^{\alpha,\tau}$ tracks the reference $ h$ very well throughout the thinning process, where $h$ reaches values close to zero.  
\begin{figure}[htbp]
	\centering
	\includegraphics[width=0.49\textwidth]{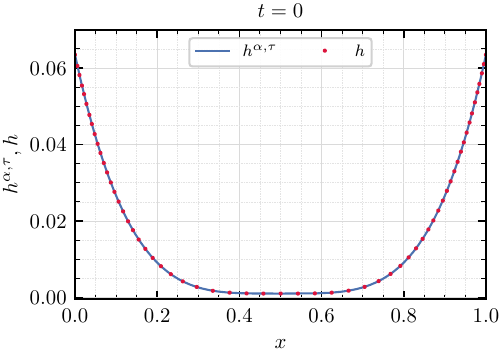}
	\includegraphics[width=0.49\textwidth]{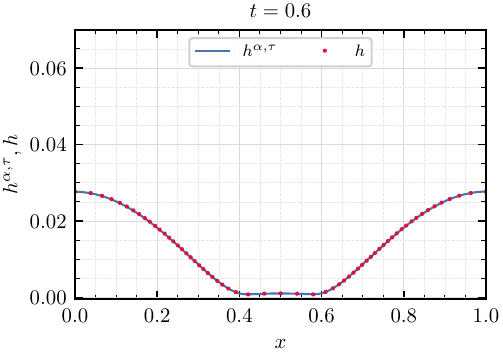}\\
	\includegraphics[width=0.49\textwidth]{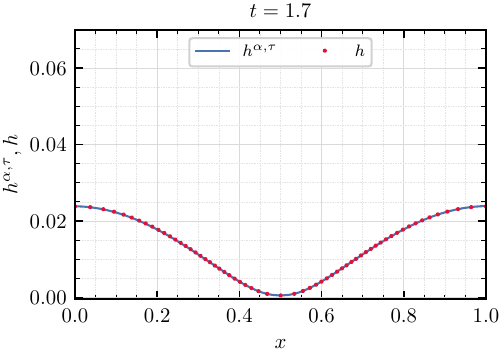}
	\includegraphics[width=0.49\textwidth]{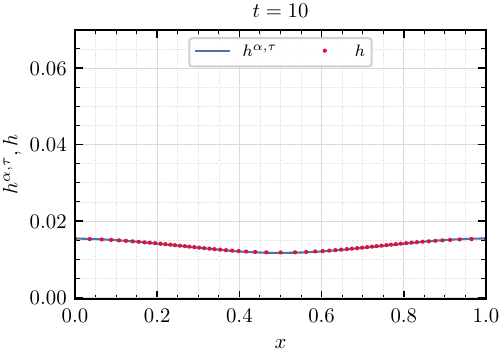}
	\caption{Test 1 -- Film thickness for the hyperbolic model
		(Blue line, $h^{\alpha,\tau}$) and the original model (red markers, reference $h$)
		at times $t=\{0,0.6,1.7,10\}$ (top-left to bottom-right).}
	\label{fig:deadcore-h}
\end{figure}

As for Test~1, we further examine the auxiliary variables.
Figure~\ref{fig:deadcore-pq} shows $p^{\alpha,\tau}$ and the flux
$q^{\alpha,\tau}$ at several times, compared with the corresponding reference
quantities. The relaxed variables again match the reference solution.
\begin{figure}[htbp]
	\centering
	\begin{subfigure}[t]{0.48\textwidth}
		\includegraphics[width=\linewidth]{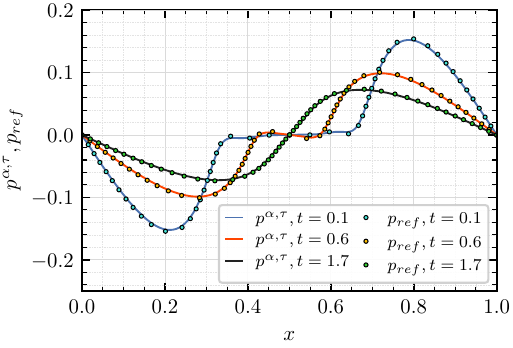}
		\caption{Pressure $p^{\alpha,\tau}$ vs.\ reference $p$.}
		\label{fig:dc-p}
	\end{subfigure}
	\begin{subfigure}[t]{0.48\textwidth}
		\includegraphics[width=\linewidth]{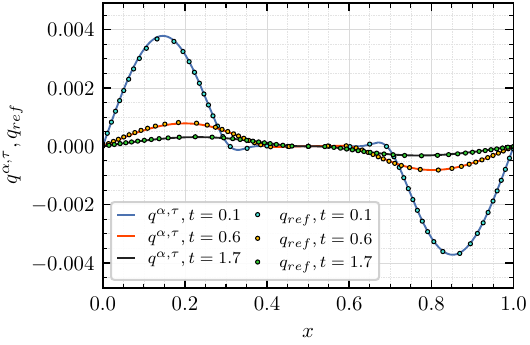}
		\caption{Flux $q^{\alpha,\tau}$ vs.\ reference $q$.}
		\label{fig:dc-q}
	\end{subfigure}
	\caption{Test 1 -- Auxiliary variables $p^{\alpha,\tau},q^{\alpha,\tau}$ of the hyperbolic model compared with the reference solution at times $t=\{0.1,0.6,1.7\}$.}
	\label{fig:deadcore-pq}
\end{figure}
It is instructive in this case to compare the energy decay of both models.
Since $\Pi\equiv 0$, the original free energy density reduces to the interfacial
contribution $\tfrac12\gamma h_x^2$, which tends to zero
as the film flattens towards its equilibrium state.
Figure~\ref{fig:deadcore-energy} reports the energy evolution on a
logarithmic scale for three values of the relaxation parameter
$\alpha\in\{10^{2},10^{3},10^{4}\}$. All curves decay monotonically and, in the
early and intermediate regimes, follow the reference $F$ closely. However, for the hyperbolic system, energy does not converge to zero, but to a constant state, dependent on the relaxation parameters. Indeed, in view of \eqref{energy_inequality}, the hyperbolic system decays until $q^{\rel}$ reaches zero. However, the energy carries additional contributions from the variables $\psi\rel$ and $w\rel$ that do not vanish exactly at equilibrium. 
Indeed, at a flat state in every variable, one obtains for instance for $\psi$ the ODE $\psi\rel_{tt}=-\tfrac{\alpha}{\beta}\psi\rel$, so $(\psi\rel,w\rel)$ do not come to necessarily to rest but may oscillate about the origin at frequency $\sqrt{\alpha/\beta}$. The film profile
$h\rel$ does reach an equilibrium, but this does not represent a genuine steady state of the full
hyperbolic system. The figure shows that stiffer relaxation results in better tracking of the original energy decay over longer intervals. 
\begin{figure}[htbp]
	\centering
	\includegraphics[width=0.6\textwidth]{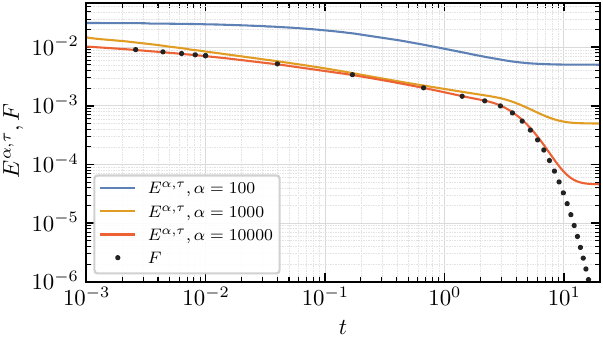}
\caption{Test 1 -- Free energy decay on a log-log scale:
	hyperbolic energy $E^{\alpha,\tau}$ for relaxation parameters
	$\alpha\in\{10^{2},10^{3},10^{4}\}$ (solid lines) compared with the original
	free energy $F$ (black markers). $\beta = 10^{-3}\alpha$ and $\tau = 10^-4$ is fixed. As $\alpha$ increases, $E^{\alpha,\tau}$
	converges towards $F$; for small $\alpha$ the energy saturates at a finite
	value set by the relaxation terms.}
\label{fig:deadcore-energy}
\end{figure}
\subsection{Test 2: Ostwald ripening in binary mixtures}\label{ssec:ostwald}
We now consider the initial value problem for the Cahn--Hilliard equation ($\Pi(c)\equiv c^3-c$)
\begin{equation}\label{eq:ch-ostwald}
	\begin{cases}
		c_t = \prn{M(c)\prn{c^3 - c- \gamma\, c_{xxx}}_x }_{x},\\[2pt]
		c(x,0) = 1 + \displaystyle\sum_{i=1}^{2}
		\prn{\tanh\prn{\dfrac{x-x_i-r_i}{\sqrt{2\gamma}}}
			-\tanh\prn{\dfrac{x-x_i+r_i}{\sqrt{2\gamma}}}},
	\end{cases}
\end{equation}
where $x_1=0.30$, $x_2=0.75$ and $r_1=0.12$, $r_2=0.06$ are the positions and
the radii of two neighboring one-dimensional bubbles. Starting from this already phase-separated but unevenly distributed state, the smaller bubble progressively diffuses and is absorbed by the larger one, which subsequently becomes stationary, a phenomenon known as Ostwald ripening. This test is done in the case of a constant mobility $M(c)=1$ as well as for a degenerate mobility $M(c)=1-c^2$.
For both models, we take a computational domain $\Omega_c = [0,1]$, discretized over $N=1000$ cells and we set $\gamma=10^{-3}$. The hyperbolic reformulation additionally
uses the relaxation parameters $\lambda=10^{3}$, $\tau=10^{-4}$ and
$\beta=10^{-7}$. The hyperbolic system uses a CFL number of $0.95$, whereas the original model uses a fixed time step $\Delta t=10^{-4}$.
\subsubsection*{Test 2A: Constant mobility $M=1$}
In this case, the final simulation time is $t=0.4$, which leaves a margin after equilibrium is reached.
Figure~\ref{fig:ostwald-c} compares the concentration obtained with the
hyperbolic model (blue solid line $c\rel$) and the original model
(red markers), at initial time, at $t=0.12$ where the dynamics is most important, and at $t=0.4$ after equilibrium is reached. The two solutions are
in excellent agreement over the whole evolution. With the
chosen relaxation parameters, no observable delay in the dynamics nor a shift in space are introduced by the hyperbolic reformulation.
\begin{figure}[htbp]
	\centering
	\begin{subfigure}[b]{0.32\textwidth}
		\includegraphics[width=\linewidth]{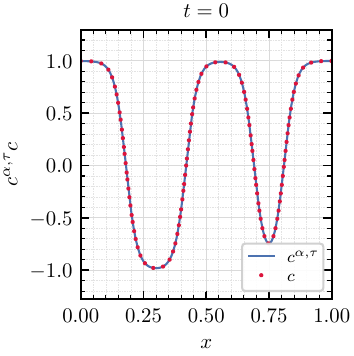}
\label{fig:ostwald-c-t0}
	\end{subfigure}
	\begin{subfigure}[b]{0.32\textwidth}
		\includegraphics[width=\linewidth]{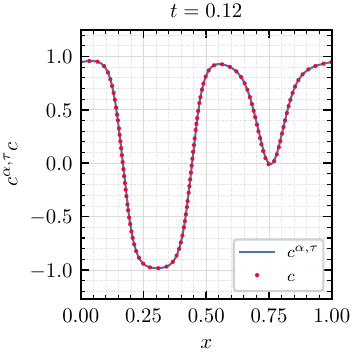}
\label{fig:ostwald-c-t02}
	\end{subfigure}
	\begin{subfigure}[b]{0.32\textwidth}
		\includegraphics[width=\linewidth]{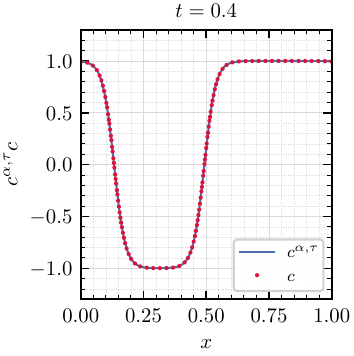}
\label{fig:ostwald-c-t04}
	\end{subfigure}
	\vspace{-0.75cm}
	\caption{Test 2A -- Comparison of the concentration $c$ for the hyperbolic
		model (blue solid line) and the original Cahn--Hilliard model (red
		markers) at times $t=\{0,0.2,0.4\}$.}
	\label{fig:ostwald-c}
\end{figure}
In order to further validate the relaxation processes in the model, we also compare in Figure~\ref{fig:ostwald-pqE} the variables $p^{\alpha,\tau}$ and $q^{\alpha,\tau}$ against their reference counterparts i.e. $p_{ref}=c_x$ and $q_{ref}=-(c^3-c-\gamma c_{xx})_x$, computed from the numerical solution of the original model using finite differences. We provide this comparison at $t=0.12$, in order to also check for eventual time delays when the solution is most dynamic. Lastly, we also plot the time evolution of total energy in both models. The figure shows that the relaxed variables $p^{\alpha,\tau}$ and $q^{\alpha,\tau}$
reproduce the reference quantities accurately, confirming that the hyperbolic
system captures not only the concentration but also the gradient and flux
structure of the underlying dynamics. Finally, the discrete energy $E^{\alpha,\tau}$ of the hyperbolic model closely follows the free energy $F$ of the original model defined in \eqref{energy} and decreases monotonically, up to reaching a constant state corresponding to equilibrium.
\begin{figure}[htbp]
	\centering
	\begin{subfigure}[t]{0.32\textwidth}
\includegraphics[width=\linewidth]{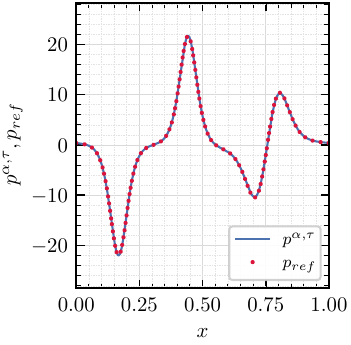}
		\caption{$p^{\alpha,\tau}$ vs.\ reference $c_x$.}
		\label{fig:ostwald-p}
	\end{subfigure}
	\begin{subfigure}[t]{0.32\textwidth}
		\includegraphics[width=1\linewidth]{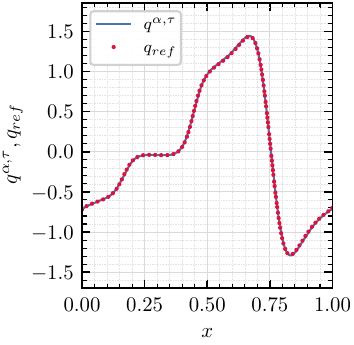}
		\caption{$q^{\alpha,\tau}$ vs.\ reference.}
		\label{fig:ostwald-q}
	\end{subfigure}
	\begin{subfigure}[t]{0.32\textwidth}
			\includegraphics[width=0.98\linewidth]{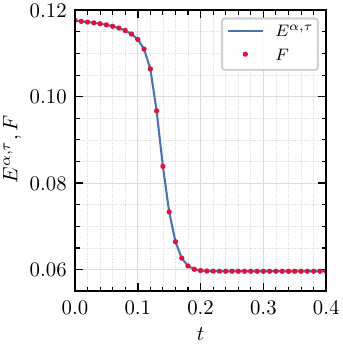}
		\caption{Energy decay
			$E^{\alpha,\tau}$ vs $F$.}
		\label{fig:ostwald-energy}
	\end{subfigure}
	\caption{Test 2A -- (a-b) Auxiliary variables of the hyperbolic
		model compared with their reference solution at $t=0.12$. (c) Plot of the total energy over time for both the hyperbolic and original models with constant mobility $M=1$.}
	\label{fig:ostwald-pqE}
\end{figure}
\subsubsection*{Test 2B: Degenerate mobility $M=1-c^2$}
The main change here with respect to the previous case is the time scale of the dynamics, since the mobility function penalizes time evolution far from the interfaces. Snapshots of the solution are given in Figure~\ref{fig:ostwaldM-c} and auxiliary variables together with the energy are provided in Figure~\ref{fig:ostwaldM-pqE}. In particular, we see here that for the same configuration and the same values of the relaxation parameters, $q^{\alpha,\tau}$ shows a slight discrepancy with the reference value, computed from the original model's numerical solution. 
\begin{figure}[htbp]
	\centering
	\begin{subfigure}[b]{0.32\textwidth}
		\includegraphics[width=\linewidth]{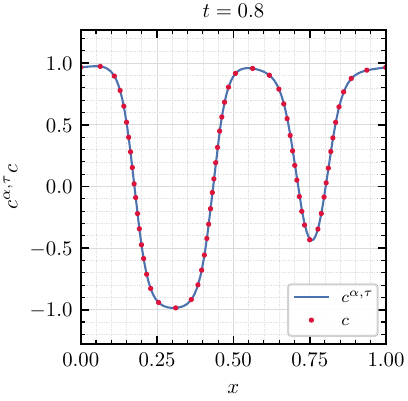}
\label{fig:ostwaldM-c-t0}
	\end{subfigure}
	\begin{subfigure}[b]{0.32\textwidth}
		\includegraphics[width=\linewidth]{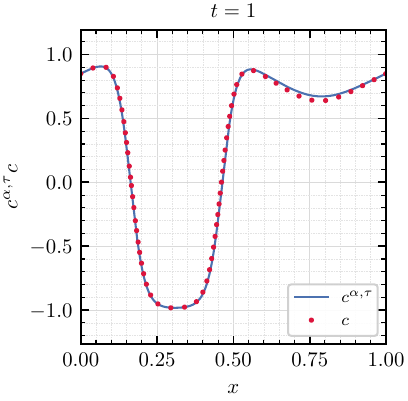}
\label{fig:ostwaldM-c-t02}
	\end{subfigure}
	\begin{subfigure}[b]{0.32\textwidth}
		\includegraphics[width=\linewidth]{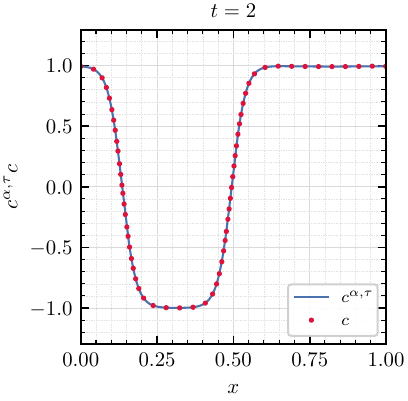}
\label{fig:ostwaldM-c-t04}
	\end{subfigure}
	\vspace{-0.75cm}
	\caption{Test 2B -- Comparison of the concentration for the hyperbolic
		(blue solid line, $c\rel$) and original Cahn--Hilliard models (red
		markers, $c$), with degenerate mobility function $M(c)=1-c^2$ at times $t=\{0.8,1,2\}$.}
	\label{fig:ostwaldM-c}
\end{figure}
\begin{figure}[htbp]
	\centering
	\begin{subfigure}[t]{0.32\textwidth}
\includegraphics[width=\linewidth]{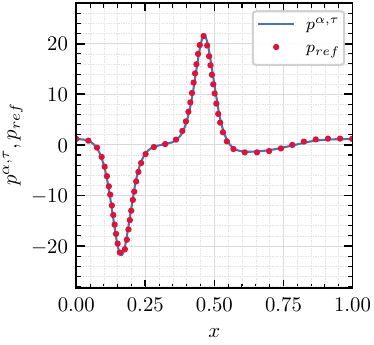}
		\caption{$p^{\alpha,\tau}$ vs.\ reference $c_x$.}
		\label{fig:ostwaldM-p}
	\end{subfigure}
	\begin{subfigure}[t]{0.32\textwidth}
		\includegraphics[width=1\linewidth]{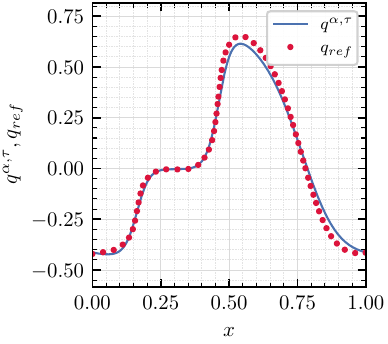}
		\caption{$q^{\alpha,\tau}$ vs.\ reference.}
		\label{fig:ostwaldM-q}
	\end{subfigure}
	\begin{subfigure}[t]{0.32\textwidth}
			\includegraphics[width=0.98\linewidth]{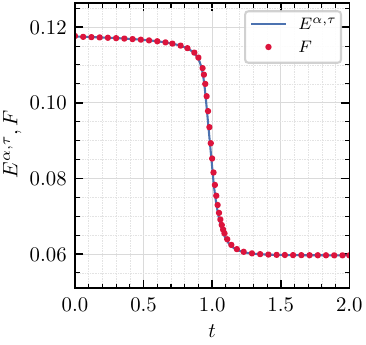}
		\caption{Energy decay
			$E^{\alpha,\tau}$ vs $F$.}
		\label{fig:ostwaldM-energy}
	\end{subfigure}
	\caption{Test 2B -- (a-b) Auxiliary variables of the hyperbolic
		model compared with their reference solution at $t=1.0$. (c) Plot of the total energy over time for both the hyperbolic and the original models with degenerate mobility $M(c)=1-c^2$.}
	\label{fig:ostwaldM-pqE}
\end{figure}

\subsection{Test 3: Undercompressive capillary shock for thin films with convection}\label{ssec:shock}
We consider here a driven thin film exhibiting an
undercompressive (capillary) shock, following the classical setting of
Bertozzi \emph{et al.}~\cite{bertozzi1999undercompressive}. The governing equations require here an additional convection term 
\begin{equation*}
	h_t + \prn{h^2 - h^3}_x + \prn{h^3 h_{xxx}}_x = 0,
\end{equation*}
$\gamma$ is set to unity. The computational domain is $\Omega_c=[0,80]$, discretized over $N=2400$ cells.
The initial condition is a $\tanh$-type front connecting the far-field thickness
$h_\infty$ to the substrate value $b$
\begin{equation}
	h(x,0) = \frac{h_\infty + b}{2} - \frac{h_\infty - b}{2}
	\tanh\prn{a\prn{x - x_0}},
    	\label{eq:IC_tanh}
\end{equation}
with the values $h_\infty=0.3$, $b=0.1$, $x_0=5$ and $a=3$. The main dynamics of the solution depend on the values of the parameters. In particular, for $b<h_\infty<h_1$, where $h_1$ is a threshold value, the solution travels and develops a capillary ridge followed by an undercompressive shock profile \cite{bertozzi1999undercompressive}. The shock speed $s$ is obtained from the
Rankine--Hugoniot relation of the convective part,
\begin{equation*}
	s = \frac{f(h_\infty) - f(b)}{h_\infty - b}
	= h_\infty  + b - (h_\infty^2+b h_\infty + b^2).
\end{equation*}
For the hyperbolic reformulation, we simply supply the convection term to the $h^{\alpha,\tau}$ equation. We use for this test case $\lambda=10^{3}$, $\tau=10^{-3}$ and
$\beta=10^{-6}$, with a CFL number of $0.95$. The final simulation time is $t=240$. Figure~\ref{fig:shock-evol} shows, on the left, the evolution of the film
profile $h^{\alpha,\tau}$ for the hyperbolic model together with the reference solution $h$, computed numerically at successive times $t\in\{0,80,160,240\}$. We highlight in the $x-$axis the theoretical positions $x_0 + s\,t_i$ predicted by the shock speed $s$, which further validates the numerical solutions. A \textit{capillary ridge} develops and grows over time until reaching a stationary profile. 
Figure~\ref{fig:shock-bertozzi} also shows a comparison of the computed profile against the reference data of Bertozzi \emph{et al.},  (digitized manually from~\cite{bertozzi1999undercompressive} and
 translated in space to the used frame). In both panels the hyperbolic solution $h^{\alpha,\tau}$ and the reference $h$ remain in close agreement: the position,
height and shape of the capillary ridge and of the undercompressive shock are recovered accurately.
\begin{figure}[htbp]
	\centering
	\begin{subfigure}[t]{0.52\textwidth}
		\includegraphics[width=\linewidth]{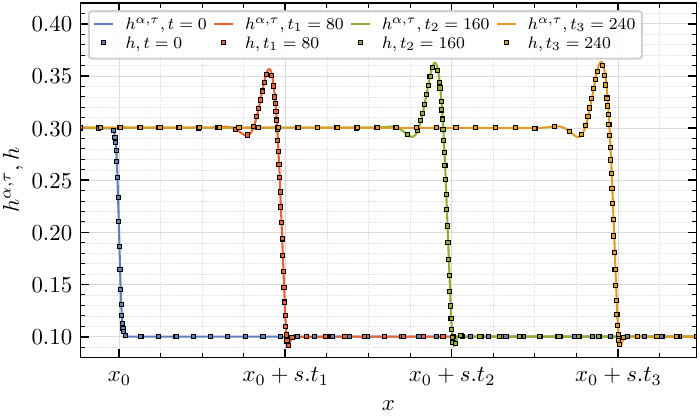}
		\caption{Travelling-wave profiles at $t\in\{0,80,160,240\}$,  each~shifted
			by $x_0 + s\,t_i$.}
		\label{fig:shock-tw}
	\end{subfigure}
	\begin{subfigure}[t]{0.47\textwidth}
		\includegraphics[width=\linewidth]{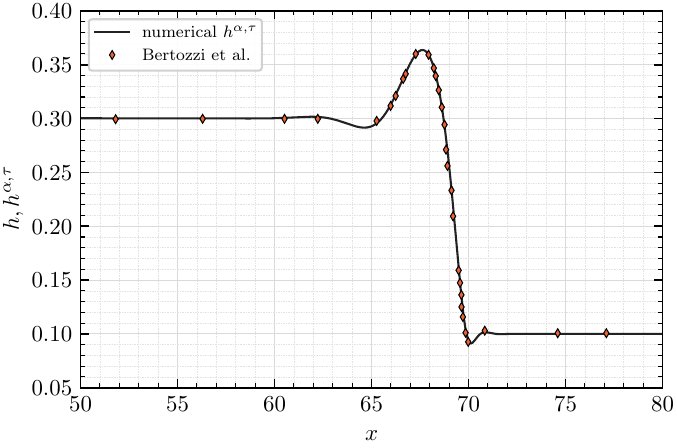}
		\caption{Computed profile vs.\ reference data of Bertozzi \emph{et al.}
			near the shock.}
		\label{fig:shock-bertozzi}
	\end{subfigure}
	\caption{Test 3 -- Undercompressive capillary shock. Film thickness for the
		hyperbolic model ($h^{\alpha,\tau}$, lines) and the reference ($h$, markers):
		(a) snapshots of the numerical solutions of the hyperbolic and original  models at different times (b)
		quantitative comparison with a reference result from Bertozzi \emph{et al.} \cite{bertozzi1999undercompressive}}
	\label{fig:shock-evol}
\end{figure}

In a similar context, we illustrate the dependence of the shock dynamics on the left film thickness $h_{\infty}$ in a test case adapted from \cite{li2011numerical}. We take a smaller computational domain $\Omega_c=[0,20]$, discretized over $N=500$ points for both models. We consider again the initial condition \eqref{eq:IC_tanh} and we take three distinct far-field heights $h_\infty\in\{0.1,\,0.2,\,0.3\}$, connecting to the same substrate value $b=0.05$. The final simulation time is set to $t=40$ and all the remaining parameters remain the same as above.
Figure~\ref{fig:shock_diff_speed} reports the three profiles $h^{\alpha,\tau}$ (lines) against their reference solutions $h$~(markers). Since the shock speed
$s=h_\infty+b-(h_\infty^2+b\,h_\infty+b^2)$ increases with $h_\infty$ (up to $h_\infty = \tfrac12(1-b)$ \cite{li2011numerical}) , the three fronts travel at increasing speeds $s_1<s_2<s_3$; the corresponding theoretical positions $x_0+s_i\,t$ are marked on the $x$-axis. In every case, the hyperbolic model with height $h^{\alpha,\tau}$ reproduces the reference solution behavior and features
accurately. 
\begin{figure}[H]
\centering
\includegraphics[width=0.7\linewidth]{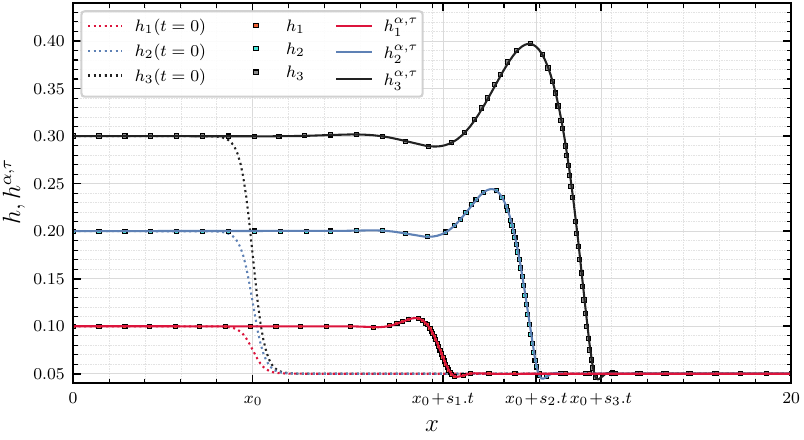}
\caption{Test 3 -- Undercompressive shock for three values of $h_\infty\in\{0.1,0.2,0.3\}$ with fixed $b=0.05$,
	at $t=40$ on $\Omega_c=[0,20]$. Markers: reference $h$; continuous lines: hyperbolic model
	$h^{\alpha,\tau}$; pointed lines: initial data. The marked abscissae $x_0+s_i\,t$ are the theoretical shock positions computed from the Rankine-Hugoniot relation.}
    \label{fig:shock_diff_speed}
\end{figure}

\subsection{Test 4: Undercompressive double shock solution}\label{ssec:shock2}
As a final validation, meant to investigate the long-time behavior of the numerical solutions, we consider another smoothed Riemann problem from Bertozzi \emph{et al.}~\cite{bertozzi1999undercompressive}, in which the initial data resolves into a double shock solution: a leading undercompressive shock and a trailing Lax shock. The film is governed by the same convection--capillarity model as in Test~3,
\begin{equation*}
	h_t + \prn{h^2 - h^3}_x = \prn{h^3 h_{xxx}}_x ,
\end{equation*} 
with mobility $M(h)=h^{3}$ and $\gamma=1$. The initial data is again \eqref{eq:IC_tanh} with $h_\infty=0.4$, $b=0.1$ and $a=20$. Since the solution structure takes a long time before the Lax-shock profile stabilizes in shape and simply travels to the left, we consider here simulations in the reference frame. Numerically, this amounts to adding a convective term, suppressing the traveling wave speed from the equations so that we solve numerically 
\begin{gather*}
	h_t + \prn{h^2 - h^3 - s.h}_x = \prn{h^3 h_{xxx}}_x,  \\
		\mathbf{U}^{\alpha,\tau}_t + \mathbf{f}(\mathbf{U}^{\alpha,\tau})_x - s.\mathbf{U}^{\alpha,\tau}_x
		= \mathbf{S}^{\alpha,\tau}(\mathbf{U}^{\alpha,\tau}),
\end{gather*}
for the original and hyperbolic systems, respectively. 
For both systems, the computational domain is $\Omega_c=[-100,100]$,
discretized over $N=5000$ cells. We use $\Delta t = 10^{-1}$ for the original model. For the hyperbolic
reformulation we use $\lambda=10^{3}$, $\tau=10^{-3}$ and $\beta=10^{-6}$, with a CFL number of $0.95$.
Figure~\ref{fig:shock2} reports the film profile at successive times
$t\in\{0,t_1,t_2,t_3\}$ in the co-moving frame $x-s\,t$, where $s=0.278647$ is the speed of the leading undercompressive shock obtained from the Rankine--Hugoniot
relation of the convective part. Plotted this way, the late-time profiles collapse onto a single travelling-wave shape, confirming that the structure propagates at constant speed. The hyperbolic solution $h^{\alpha,\tau}$ (lines)
and the reference solution $h$ (markers) are in close agreement across all the plateaus, the capillary ridge and the two connecting waves, in all the stages of the solution.
\begin{figure}[htbp]
	\centering
	\includegraphics[width=0.9\textwidth]{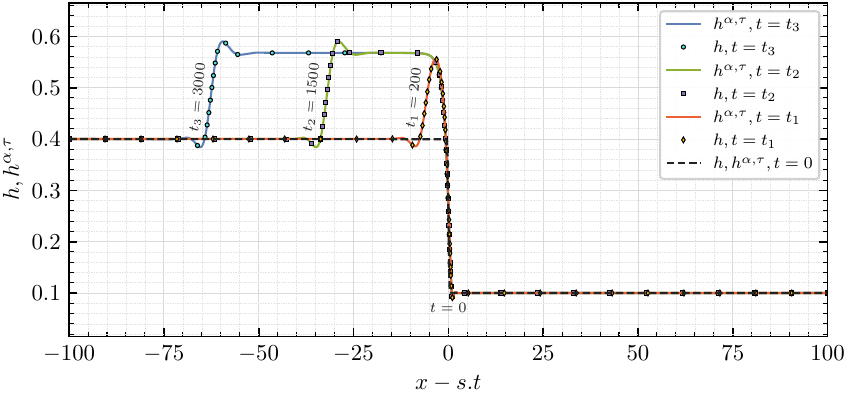}
	\caption{Test 4 -- Double shock structure. Film
		thickness for the hyperbolic model ($h^{\alpha,\tau}$, lines) and the original model ($h$, markers) at times $t\in\{0,t_1=200,t_2=1500,t_3=3000\}$, shown in the co-moving frame $x-s\,t$. Initial data is shown in black dashed lines.}
	\label{fig:shock2}
\end{figure}

\section{Conclusions and future scope}\label{sec: conclusions}
In this article, we proposed a novel energy-consistent hyperbolic system approximating the solution of complex nonlinear fourth-order partial differential equations possessing a gradient flow structure. In particular, our approximate system is designed to preserve the energy of the underlying system when the approximation parameters vanish. By utilizing the relative energy framework, we explicitly proved the convergence of weak entropy solutions of the relaxation approximation to the smooth solutions of the fourth-order PDEs. We presented a series of numerical test cases that validate our approximation approach.

The numerical scheme developed for the approximate system is not positivity preserving and thus can not be applied to more critical test cases of thin film flows. Therefore, a natural extension would be to develop a provably positivity preserving and energy stable numerical scheme for the approximation system, which will be tackled in a forthcoming paper. Another natural extension of this relaxation approach is to apply it to nonlinear equations possessing a non-convex energy, such as the degenerate Cahn-Hilliard equation, where the relaxation approach needs modifications accordingly. Another extension is to apply this relaxation approach to more complex coupled systems, such as shallow water-thin film equations or Coupled Navier-Stokes-Cahn-Hilliard systems.
%\end{comment}
\bibliographystyle{mystyle}
\bibliography{references}

\end{document}